\numberwithin{equation}{section}
\newcommand{\ai}{\alpha}
\newcommand{\be}{\beta}
\newcommand{\Ga}{\Gamma}
\newcommand{\ga}{\gamma}	
\newcommand{\de}{\delta}
\newcommand{\e}{\epsilon}
\newcommand{\hm}{\mathcal{H}}
\newcommand{\lam}{\lambda}
\newcommand{\om}{\omega}
\newcommand{\Om}{\Omega}
\newcommand{\fr}{\textnormal{FocalRad}}
\newcommand{\si}{\sigma}
\newcommand{\Si}{\Sigma}
\newcommand{\vp}{\varphi}
\newcommand{\rh}{\rho}
\newcommand{\ta}{\theta}
\newcommand{\Ta}{\Theta}
\newcommand{\cms}{\operatorname{comass}}
\newcommand\res{\mathop{\hbox{\vrule height 7pt width .3pt depth 0pt
			\vrule height .3pt width 5pt depth 0pt}}\nolimits}
\newcommand{\cd}{\cdots}
\newcommand{\T}{\mathbf{T}}
\newcommand{\s}{\subset}
\newcommand{\es}{\varnothing}
\newcommand{\cp}{^\complement}
\newcommand{\TO}{\mathbb{T}}
\newcommand{\la}{\langle}
\newcommand{\ra}{\rangle}
\newcommand{\ov}[1]{\overline{#1}}
\newcommand{\no}[1]{\left\lVert#1\right\rVert}
\newcommand{\cu}[1]{\left\llbracket#1\right\rrbracket}
\DeclarePairedDelimiter{\ri}{\la}{\ra}
\newcommand{\dis}{\textnormal{dist}}
\newcommand{\du}{^\ast}
\newcommand{\pf}{_\ast}
\newcommand{\spt}{\textnormal{spt}}
\newcommand{\m}{^{-1}}
\newcommand{\ts}{\otimes}
\newcommand{\cm}{\mathbf{M}}
\newcommand{\w}{\wedge}
\newcommand{\pd}{\partial}
\newcommand{\na}{\nabla}
\newcommand{\R}{\mathbb{R}}
\newcommand{\Z}{\mathbb{Z}}
\newcommand{\C}{\mathbb{C}}
\newcommand{\id}{\textnormal{id}}
\def\thm@space@setup{%
	\thm@preskip=0.2cm plus 0cm minus 0cm
	\thm@postskip=\thm@preskip 
}
\theoremstyle{plain}
\newtheorem{thm}{Theorem}[section]
\newtheorem{lem}[thm]{Lemma}
\newtheorem{cor}{Corollary}
\newtheorem*{thmm}{Theorem}
\theoremstyle{definition}
\newtheorem{defn}{Definition}[section]
\theoremstyle{remark}
\newtheorem{rem}{Remark}
\title[Fractal singular sets II]{Area-minimizing surfaces with fractal singular sets: prevalence, moduli space, and refinements on strata}
\author{Zhenhua Liu}
\dedicatory{Dedicated to Xunjing Wei}
\begin{document}
\vspace{-3em}
	\begin{abstract}
		This paper is a continuation of our work in \cite{ZL}, about a conjecture of Almgren on area-minimizing surfaces with fractal singular sets. First, we prove that area-minimizing surfaces with fractal singular sets are prevalent on the homology level on general manifolds. They exist even in metrics arbitrarily close to the flat metric on a torus. Second, we partially determine the moduli space of area-minimizing currents near the ones with fractal singular set that we constructed in this and previous work. Around strata of codimension at least three, the fractal self-intersections deform into transverse immersions under generic perturbations of metric. The result is sharp and we construct concrete example to show that our fractal singularities do not necessarily completely dissolve under generic perturbations. Finally, we improve the results in \cite{ZL} to give examples with different strata limiting to each other and refine the first stratum to fractal subsets of finite combinatorial graphs.
	\end{abstract}	\maketitle
	\tableofcontents
	\section{Introduction}
	In this paper, we continue our work in \cite{ZL} on solving the following conjecture of Almgren (Problem 5.4 in \cite{GMT}),
	
	"\textit{Is it possible for the singular set of an area minimizing integer (real) rectifiable current to be a Cantor type set with possibly non-integer Hausdorff dimension?}"  
	
	The reader can refer to the introduction of \cite{ZL} for a detailed discussion of the history of the problem. The main theorem obtained in \cite{ZL} is as follows,
	\begin{thmm} (Theorem 1.2 in \cite{ZL})
		For any integers $n\ge 2,$ $0\le j\le n-2,$ any sequence of smooth compact (not necessarily connected)  Riemannian manifolds $N_j$ of dimension $j,$ let $K_j$ be compact subsets of $N_j,$ and $k$ be the smallest number with $K_k$ nonempty. There exists a smooth compact Riemannian manifold $M^{2n-k+1}$, and a smooth calibration $n$-form $\phi$ on it, so that $\phi$ calibrates an irreducible homologically area minimizing surface $\Si^n$ in $M.$ The singular set of $\Si$ is $K_0\cup\cd\cup K_{n-2},$ and the $j$-symmetric part of the $j$-th strata of the Almgren stratification of the singular set of $\Si$ is precisely $K_j.$ 
	\end{thmm}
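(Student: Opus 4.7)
The strategy is to build $\Si$ and its ambient manifold $M^{2n-k+1}$ by a stratum-by-stratum superposition of local models calibrated by K\"ahler-type calibrations, followed by a global gluing.

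\emph{Local models.} For a point $p \in K_j$, the desired tangent cone to $\Si$ is $\R^j \times C_j(p)$, where $C_j(p)$ is an $(n-j)$-dimensional area-minimizing cone with no further translational symmetry. A robust source of such cones is the union of finitely many transverse complex $(n-j)$-planes in $\C^{n-j}$, calibrated by a suitable power of the K\"ahler form; such a cone is rigidly calibrated and not invariant under nontrivial translations if the planes are chosen sufficiently generically. I would then parameterize a family of such cones by $N_j$, using extra fiber directions to give room for a smooth deformation, arranged so that $C_j(p)$ is genuinely singular for $p\in K_j$ and degenerates to a flat $(n-j)$-plane elsewhere.

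\emph{Parameterization and assembly.} Over each $N_j$, choose a smoothly varying family of such cones whose degeneracy locus is exactly $K_j$; this can be arranged by modulating the angles between the planes by a smooth cutoff function whose zero set is $K_j$. This yields a smooth $n$-surface $\Si_j$ inside a bundle $E_j\to N_j$ of rank $2(n-j)+1$, singular precisely along $K_j$ with the correct $j$-symmetric tangent structure. Then embed each $E_j$ inside a common ambient $M^{2n-k+1}$ (the fiber dimensions are compatible because the smallest stratum index $k$ forces the largest fiber, of rank $2(n-k)+1$), and sum the local models into a single integer rectifiable current $\Si$ representing a fixed homology class.

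\emph{Global calibration.} The principal technical step is producing a single smooth closed $n$-form $\phi$ on $M$ of comass $\le 1$ that simultaneously calibrates each piece $\Si_j$. I would construct $\phi$ locally as a K\"ahler-type $n$-form in each chart, then patch via a partition of unity subordinate to disjoint tubular neighborhoods of the $N_j$'s. The comass bound on overlaps is the delicate point: one would arrange that each local calibration agrees with a common background $n$-form outside the singular region, so that the patched form is closed and one can reduce its comass to $1$ on all of $M$ by shrinking tubular neighborhoods. Choosing the Riemannian metric on $M$ to make $\phi$ genuinely calibrating then amounts to a standard computation in the local complex coordinates.

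\emph{Main obstacles.} The hardest parts are (i) maintaining $\cms(\phi)\le 1$ globally without creating spurious singularities or reducing $\Si$; (ii) verifying that the Almgren stratification of $\Si$ is exactly $K_0\cup\cd\cup K_{n-2}$, with $j$-symmetric part exactly $K_j$, which requires ruling out any additional translation symmetry of tangent cones at points outside the prescribed $K_j$; and (iii) ensuring homological irreducibility of $\Si$, which is subtle for calibrated unions since they a priori may decompose into separately calibrated pieces. I would address (iii) by connecting the strata-wise pieces via small tubes and choosing $[\Si]$ to be primitive in $H_n(M;\Z)$, so that no proper sub-current can share its homology class.
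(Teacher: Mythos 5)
Your high-level architecture (local cone models fibered over each $N_j$, a bundle of rank $2(n-j)+1$, dimension count $2n-k+1$, then a global calibration) agrees with the paper, but two core mechanisms are wrong. The paper turns the singularity on and off not by modulating angles between the sheets but by \emph{translating} one sheet off of the other via a graph. Writing $\TO^{2(n-j)}=\TO^{n-j}\times\TO^{n-j}$ and taking $\TO_1=\TO^{n-j}\times\{0\}$, $\TO_2=\{0\}\times\TO^{n-j}$ (which intersect only at a point), the current is $T=\TO_1\times N_j+\TO_2\times(\text{graph of }f_j)$ inside $\TO^{2(n-j)}\times N_j\times S^1$, where $f_j:N_j\to\R$ is smooth with $f_j^{-1}(0)=K_j$. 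The two sheets meet exactly over $K_j$, so the singular set is exactly $K_j$, and the tangent cone there is $\R^j$ times a pair of orthogonal $(n-j)$-planes, which has precisely $j$-dimensional translation symmetry. Your angle-modulation device cannot accomplish this: two $(n-j)$-planes through a common point remain singular at that point for every nonzero angle, and the angle-zero degeneration produces a multiplicity-two plane rather than a smooth multiplicity-one sheet. The singularity never detaches under a rotation; it only detaches under a translation.

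Your gluing step also fails as stated. Patching closed forms with a partition of unity $\{\chi_\alpha\}$ gives $d\big(\sum_\alpha\chi_\alpha\phi_\alpha\big)=\sum_\alpha d\chi_\alpha\wedge\phi_\alpha$, which is nonzero in general, so the patched form is not closed and cannot be a calibration. The mechanism the paper actually uses (Lemma \ref{glucalfor} here, and its predecessors in the earlier companion papers) is different: it writes the cohomological difference $\phi_j-\phi=d\psi_j$ using a Hodge-theoretic primitive $\psi_j$, multiplies the \emph{primitive} by a cutoff $\tau$, and replaces $\phi_j$ by $\phi_j+d(\tau\psi_j)$, which is automatically closed; the comass defect so created is then absorbed by an explicit conformal and anisotropic rescaling of the metric in a transition annulus. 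Finally, the calibration is not a K\"ahler power -- after the graph deformation the two sheets are not complex subvarieties in any compatible complex structure -- and the comass bound is verified via the Dadok--Harvey--Morgan torus lemma (cf.\ Lemma \ref{prodcal}) for explicitly built wedge forms, with the metric chosen just so that equality is saturated on each sheet. Your ``standard computation in local complex coordinates'' underestimates this step considerably.
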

	However, this still leaves open several very crucial questions. First, the manifolds we constructed have special topology and it is not known whether such phenomena can happen in general manifolds. In this direction, we prove the following theorem.
	\begin{thm}\label{prevalence}
		On any compact closed not necessarily orientable smooth manifold, there exist homologically area-minimizing surfaces with fractal singular set in some metrics, as long as the rational homology of $M$ is nontrivial in some homology class of dimension and codimension larger than $2$.
	\end{thm}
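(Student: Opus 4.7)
The plan is to reduce to \cite{ZL} by transplanting a compact local model of a calibrated fractal-singular patch into a tubular neighborhood of a smooth submanifold of $M$ representing the given homology class. Fix a nontrivial class in $H_n(M^d;\Q)$ with $n \geq 3$ and $d - n \geq 3$. By Thom's realization theorem, some positive integer multiple of any rational class is represented by a smoothly embedded closed submanifold (orientable or unoriented as the setting requires), so fix such $N^n \subset M$ whose class is a nonzero rational multiple of the given one.

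Next, extract a compact local model from \cite{ZL}. Applying its main theorem with $k = n-2$ and any compact $(n-2)$-manifold carrying a Cantor set produces a compact Riemannian manifold $Y^{n+3}$, a calibration $n$-form $\phi$ on $Y$, and a calibrated area-minimizing surface $\Si^n \subset Y$ with fractal singular set. Since the singular set is compact, choose a small ball $B \subset Y$ containing it, outside of which $\Si$ is a smooth embedded $n$-disk; the triple $(B, \phi|_B, \Si \cap B)$ is the transferable calibrated fractal-singular patch. Padding by a Euclidean factor and pulling $\phi$ back via the projection, the same patch sits inside $B \times D^{d-n-3}$ of ambient dimension $d$.

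To transplant, fix a point $p \in N$ and a tubular neighborhood $U \subset M$ of $p$ diffeomorphic to $B \times D^{d-n-3}$, arranged so that the smooth outer annulus of $(\Si \cap B) \times \{0\}$ matches $N \cap U$ along $\pd U$; a small collar surgery allows such matching. Define $\Si'$ to equal $(\Si \cap B) \times \{0\}$ inside $U$ and $N$ outside, equip $U$ with the product of the metric from \cite{ZL} and the flat metric, and extend the metric arbitrarily over $M$. Then $\Si'$ carries the desired fractal singular set and represents the same nonzero rational class as $N$. To establish homological area-minimization, I would produce a global closed $n$-form $\phi'$ on $M$ that extends $\pi_1^* \phi$ on $U$, calibrates $N \setminus U$ in a suitable adapted metric (via standard anisotropic scaling constructions for smooth submanifolds), and interpolates between the two via cut-off forms in a collar of $\pd U$. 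The main obstacle is this final gluing step: the interpolated form must simultaneously remain closed and have comass $\leq 1$. The codimension hypothesis $d - n \geq 3$ is used crucially here, since a transverse disk of dimension at least $3$ provides enough room to deploy suitable cut-off $n$-forms without violating either constraint.
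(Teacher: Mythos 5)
Your outline correctly identifies the broad strategy --- use Thom's realization to obtain an embedded representative $N$, then graft in a local calibrated model with fractal singular set --- and this is indeed how the paper deduces the result from Theorem~\ref{prev}. But two steps in your plan fail as written. The topology of the graft is wrong: in any local model extracted from \cite{ZL}, the surface $\Si$ near its singular set is a union of two sheets crossing along a codimension-two subset of $\Si$, so $\Si\cap B$ meets $\pd B$ in \emph{two} disjoint annular pieces, not one ``outer annulus.'' A naive replacement of the single $n$-disk $N\cap U$ by such a patch does not even produce a cycle, let alone one homologous to $N$. The paper instead leaves $N$ intact, places the two model sheets $\si(X)$ and $Y_f$ inside a small ball disjoint from $N$, and joins all three pieces by two thin necks; this is exactly where codimension $\ge 3$ enters --- generic necks miss everything, so the resulting current is an immersion whose self-intersection set is still $\si(X)\cap Y_f$. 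That is a different and more precise use of the codimension hypothesis than your heuristic about ``room to deploy cut-off forms.''

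The other gap is the calibration gluing, which you rightly flag as the main obstacle but leave entirely open, and which is where essentially all of the paper's work lies. Two closed forms of comass at most $1$ cannot simply be interpolated by a cut-off function and be expected to remain closed with comass at most $1$. What makes gluing possible is that the local and global forms are cohomologous across the collar, so their difference has a small primitive, and the residual comass error is then absorbed by a conformal (or anisotropic) stretching of the metric; this is the content of Lemma~\ref{glucalfor} here and of Lemma 2.7 of \cite{ZL1}, which the proof of Theorem~\ref{prev} invokes to close the argument. Without this machinery your plan does not terminate. A further, more cosmetic, divergence is that the paper builds its local model from the minimal cone $\si(C)\s S^{n+2}$ of \cite{ZL1} via an explicit embedding $I$ into a Euclidean ball, rather than by cutting a metric ball out of the closed manifold of \cite{ZL}: a metric ball containing a Cantor set spread over all of $S^{n-2}$ need not be small, so your ``small ball $B$'' step does not come for free.
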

	\begin{rem}
		The dimension requirement is sharp, since $2$-dimensional area-minimizing surfaces are branched minimal immersions by \cite{DSS3}.
	\end{rem}
	This follows from the following more general result about prescribing singular sets.
	\begin{thm}\label{prev}
		For any $m+n$-dimensional smooth compact closed not necessarily orientable manifold $M,$ let $[\Si]$ be an $n$-dimensional integral homology class with $m,n\ge 3$. Let $K$ denote arbitrary closed subsets of the standard spheres $S^{n-2}$ of dimension $n-2.$ If $[\Si]$ can be represented by an embedded submanifold, then in a smooth metric $g$, the $n-2$-th strata of the Almgren stratification of the unique area-minimizing integral current $T$ in the class $[\Si]$ is precisely $K$.
	\end{thm}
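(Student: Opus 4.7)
The plan is to transplant the local model from the main theorem of \cite{ZL} into a tubular neighborhood of an embedded representative of $[\Si]$, using the codimension bound $m\ge 3$ to fit the codimension-$3$ model, and then to promote the local calibration to a global one by suitably arranging the ambient metric on the complement. I would begin by applying the quoted theorem with $K_{n-2}=K$ and $K_i$ empty for $i\le n-3$, which produces a smooth compact $(M_0^{n+3},g_0)$, a closed calibration $n$-form $\phi_0$, and a $\phi_0$-calibrated irreducible integral $n$-current $\Si$ whose full singular set, and hence whose $(n-2)$-th Almgren stratum, is precisely $K\s S^{n-2}$. Away from $K$, $\Si$ is a smoothly embedded submanifold, and the nontrivial part of both $g_0$ and $\phi_0$ is supported in a small tubular neighborhood $V_0$ of $K$ in $M_0$.

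Next, I would fix a smooth embedded representative $\Si_0\s M$ of $[\Si]$ and identify a tubular neighborhood $U$ of $\Si_0$ with its normal disk bundle. Since $m\ge 3$ while the ambient codimension of $\Si$ in $M_0$ is exactly $3$, a neighborhood of $\Si$ in $M_0$ fiber-embeds into $U$: one picks a rank-$3$ smooth sub-bundle $N'\s N\Si_0$, transplants $g_0$ and $\phi_0$ along such an embedding, and smoothly interpolates back to the original metric of $M$ across an annular shell of $U$. This yields a metric $g$ on $M$ and a closed $n$-form $\phi_0'$ on a sub-neighborhood $U'\Subset U$ coinciding with $\phi_0$ near the transplanted $\Si$, with $\cms_g(\phi_0')\le 1$ there.

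I would then blend $\phi_0'$ with a closed representative $\eta$ of the Poincar\'e dual of $[\Si]$ on $M$: inside $U$ both forms represent the Thom class of the rank-$m$ normal bundle, so their difference is exact and can be absorbed by adding $d\alpha$ for a compactly supported $(n-1)$-form $\alpha$. Finally, conformally rescale $g$ on $M\setminus U'$ by a sufficiently large constant so that the comass of the resulting global closed form $\phi$ is strictly less than $1$ off $\Si$; $\phi$ is then a calibration, every $\phi$-calibrated integral current is supported on $\Si$, and $\Si$ becomes the unique area-minimizer in $[\Si]$, carrying the prescribed $(n-2)$-th Almgren stratum $K$.

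The hard part is arranging a single globally defined closed $n$-form on $M$ that simultaneously equals $\phi_0$ near the transplanted surface, represents the Poincar\'e dual of $[\Si]$, and has comass at most $1$ after the global metric rescaling. Cohomological compatibility hinges on both local and global forms representing the same Thom class on $U$, which holds thanks to the codimension hypothesis $m\ge 3$; the comass bound off $\Si$ is a matter of scale, to be enforced by stretching the metric on $M\setminus U'$ before the blending.
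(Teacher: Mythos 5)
Your proposal diverges from the paper's argument at a structural level, and the departure introduces genuine gaps.

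The paper does not transplant a neighborhood of the singular current $\Si$ from $M_0$ into the tubular neighborhood of an embedded representative $\Si_0\s M$. Instead, it builds a compactly supported, calibrated singularity model $\si(X)+Y_f$ inside a small Euclidean ball $B_2^{m+n}$ chosen \emph{disjoint} from $\Si_0$ (using the embedding $I:S^{n+2}\times[-\frac{1}{2},\frac{1}{2}]\to B_2^{m+n}$ followed by the trivial $\R^{m-3}$ extension), and then performs a \emph{connected sum} $\Si_0\#\si(X)\#Y_f$ via two thin necks, using $n\ge 3$ to control transversality and $m\ge 3$ to fit the dimension-$(n+3)$ local model. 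Globalization is then handed to Lemma 2.7 of \cite{ZL1}. Your transplantation step skips the connected-sum entirely, and this is the main problem: a closed compact $M_0^{n+3}$ cannot be embedded into a tubular neighborhood $U$ of $\Si_0$, and if you only transplant a small open piece near the singular set you obtain a current with boundary and no closing mechanism. You never explain how the resulting singular current is homologous to $\Si_0$, so nothing in your construction is known to represent $[\Si]$. The paper's connected sum is precisely the device that joins the local singularity model to the given homology class.

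There is also a concrete degree mismatch in your cohomological step. The calibration $\phi_0$ (and the interpolated $\phi_0'$) is an $n$-form, while a representative of the Poincar\'e dual of $[\Si]$ on $M^{m+n}$, and the Thom class of the rank-$m$ normal bundle $N\Si_0$, is an $m$-form. For $m\ne n$ these cannot lie in the same cohomology group, so the claim that ``inside $U$ both forms represent the Thom class'' and hence differ by an exact term does not parse. The correct cohomological statement in the paper's strategy is encoded in Lemma 2.7 of \cite{ZL1} (and the exactness criterion in the form of the unnumbered lemma after Lemma \ref{glucalfor}), which is phrased in terms of the closed $n$-form pairing correctly with generators of $H_n(U,\Z)$.

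A final, smaller issue: conformally rescaling $g$ on $M\setminus U'$ by a large constant will indeed decrease the comass of a fixed $n$-form there, but you must simultaneously keep the metric smooth across $\pd U'$ and keep the form closed; this transition is not free and is exactly the content of gluing lemmas such as Lemma \ref{glucalfor} and the cited Lemma 2.7 in \cite{ZL1}, which your sketch treats as a matter of ``scale.'' So the proposal, as written, would not go through without at least reinstating the connected-sum step and replacing the Poincar\'e-dual/Thom-class matching by a degree-$n$ cohomological argument.
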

\begin{rem}
	By \cite{RT}, this happens unconditionally for $n\le 6.$
\end{rem}
	Of course, with all these abstract examples, one is natural to ask if such phenomena always happen in exotic metrics not encountered in real life. The answer is a striking no. Such phenomena can happen arbitrarily close even to the standard metric in the standard flat torus.
	\begin{thm}\label{tor}
		Let $\TO^{m+n}=\R^{m+n}/\Z^{m+n}$ be the standard $m+n$ flat torus with the standard flat metric $\de$ and $m\ge 3,n\ge 3$. Suppose $\TO_1,\TO_2$ are two $m$-dimensional subtorus intersecting non-transversely and orthogonally along a subtorus $\TO_3$ of dimension $1\le l\le m-2.$ Let $K$ be an arbitrary closed subset of $\TO_3,$ then there exists a sequence of smooth metric $g_j\to \de$ converging smoothly to $\de,$ so that in  each $g_j$ a homological area-minimizer in the homology class $[\TO_1]+[\TO_2]$ has $K$ as its singular set.
	\end{thm}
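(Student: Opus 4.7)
The plan has two ingredients: (i) in the flat metric $\delta$, $\TO_1 \cup \TO_2$ is already a calibrated area-minimizer in $[\TO_1] + [\TO_2]$, and (ii) the local surgery from \cite{ZL} underlying Theorem~\ref{prev} can be executed with arbitrarily small metric perturbations in tiny balls around points of $\TO_3 \setminus K$, smoothing the singularity there while leaving it intact over $K$.

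\textbf{Step 1 (flat calibration).} At a point of $\TO_3$ decompose the ambient tangent space orthogonally as $L \oplus V_1 \oplus V_2 \oplus W$, where $L$ is tangent to $\TO_3$ (dimension $l$), $V_i$ is the orthogonal complement of $L$ in $T\TO_i$ (dimension $m-l$), and $W$ is the remainder. All four summands are parallel, so
\[
\phi := \omega_{T_1} + \omega_{T_2} = \omega_L \wedge (\omega_{V_1} + \omega_{V_2})
\]
defines a parallel (hence closed) $m$-form on $\TO^{m+n}$. A principal-angle computation using $V_1 \perp V_2$ and $\dim V_i = m - l \geq 2$ shows $\phi$ has comass $1$: on a simple unit $m$-vector with principal angles $\theta_1,\dots,\theta_{m-l}$ between its $V_1\oplus V_2$-part and $V_1$, the value is bounded by $\prod\cos\theta_i + \prod\sin\theta_i \leq 1$ (Cauchy--Schwarz for $m-l=2$, critical-point analysis otherwise). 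By construction $\phi$ restricts to the volume form on each $\TO_i$, so it calibrates $\TO_1 + \TO_2$; this current is then area-minimizing in $[\TO_1]+[\TO_2]$ with singular set $\TO_3$.

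\textbf{Step 2 (local smoothing).} For each $p \in \TO_3 \setminus K$ pick $r_p < \dis(p, K)$ and apply the construction of \cite{ZL} in $B(p, r_p)$: modify $\phi$ to a closed comass-$1$ form $\phi_p$ that agrees with $\phi$ outside $B(p, r_p)$ and calibrates a smooth minimal ``neck'' inside, joining $\TO_1$ and $\TO_2$. The corresponding metric perturbation is supported in $B(p, r_p)$, with size controlled by $r_p$. The desingularization model is precisely the smoothing of two orthogonal $m$-planes meeting in an $l$-plane; its existence requires $l \geq 1$ and $m-l \geq 2$.

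\textbf{Step 3 (global assembly).} Cover $\TO_3 \setminus K$ by pairwise disjoint balls $B(p_j, r_j)$ with $p_j \in \TO_3 \setminus K$ and $r_j < \dis(p_j, K)$. Choosing $r_j$ to tend to $0$ sufficiently fast makes $\|g - \delta\|_{C^k}$ arbitrarily small for any prescribed $k$; varying this tolerance yields a sequence $g_j \to \delta$ in $C^\infty$.

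\textbf{Step 4 (singular set).} In each $g_j$, the area-minimizer in $[\TO_1]+[\TO_2]$ coincides with $\TO_1 \cup \TO_2$ in a neighborhood of $K$ (where $g_j = \delta$ and $\phi$ calibrates) and is smooth inside each $B(p_j, r_j/2)$ by Step 2. Its singular set is therefore exactly $K$.

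\textbf{Main obstacle.} The crux is the local surgery of Step 2: producing a closed, comass-$1$ modification of $\phi$ (with matching ambient metric) that desingularizes the union of two orthogonal-meeting $m$-planes into a smooth calibrated surface, while keeping the metric perturbation small. The simultaneous control of closedness, comass, smoothness of the calibrated current, and $C^k$-size of the metric perturbation is the heart of the argument, and relies on the explicit desingularization machinery of \cite{ZL}; its validity uses $l \geq 1$ and $m - l \geq 2$ in an essential way. The novelty of Theorem~\ref{tor} relative to Theorem~\ref{prev} is that the starting configuration $\TO_1 \cup \TO_2$ is already calibrated in the flat metric, so only local, arbitrarily small surgeries are needed and the resulting metrics stay arbitrarily close to $\delta$.
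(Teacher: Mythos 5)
Step~1 of your proposal is correct and is implicitly also in the paper: the parallel form $\TO_1^\ast + \TO_2^\ast$ calibrates $\TO_1 + \TO_2$ in the flat metric (Lemma~\ref{prodcal}), with singular set $\TO_3$. However, your Steps~2--4 take a genuinely different route from the paper and, as written, contain gaps that make them fail.

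The paper does not remove singularities locally from $\TO_1\cup\TO_2$; it instead perturbs one of the tori. Recalling Section~\ref{recap}, one replaces $\TO_2$ by the graph $\TO_f=\cu{y_1\cdots y_{m-l}}\times f_l$ of a smooth function $f_l$ on $\TO_3$ with zero set exactly $K$, and the construction of \cite{ZL} produces a calibration/metric pair making $\TO_1 + \TO_f$ calibrated with singular set precisely $\TO_1\cap\TO_f=K$. Setting $f^j_l=j^{-1}f_l$ yields a sequence of such pairs converging smoothly to $(\de,\TO_1^\ast+\TO_2^\ast)$, and Lemma~\ref{glucalfor} glues these local pairs to the flat data on the rest of the torus. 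Thus in every $g_j$ the singular set is exactly $K$, not $\TO_3$, and the ``jump'' to $\TO_3$ only happens as $j\to\infty$. This sidesteps any local desingularization.

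Your Step~3 is impossible in general. The set $\TO_3\setminus K$ is an open subset of a compact $l$-torus; a cover of it by pairwise disjoint balls $B(p_j,r_j)$ centered at its own points would intersect $\TO_3$ in a disjoint open cover, but a connected component of $\TO_3\setminus K$ can only be covered by a disjoint family of open sets if a single one of them contains it. For $K=\varnothing$ (which is a legitimate choice here) $\TO_3\setminus K=\TO_3$ is compact and connected, and no cover by disjoint open balls exists at all; more generally the scheme fails whenever a component of $\TO_3\setminus K$ has diameter larger than its distance to $K$.

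Your Step~2 also does not go through. Since $\dim\TO_3=l\ge1$, the singular locus of $\TO_1\cup\TO_2$ is a positive-dimensional submanifold passing through any ball $B(p,r_p)$ with $p\in\TO_3$; in particular it meets the boundary sphere. Any ``neck'' or desingularization produced inside such a ball would have to blend smoothly into the still-singular configuration $\TO_1\cup\TO_2$ on $\pd B(p,r_p)$, which is precisely what a smoothing cannot do. The neck-type desingularizations (Lawlor necks, etc.) replace a pair of planes meeting at an isolated point, not along a stratum with boundary in the surgery region. The machinery in \cite{ZL} that you invoke does not produce smooth minimal necks joining $\TO_1$ and $\TO_2$; it controls the self-intersection set by a graphical perturbation, which is a different mechanism and does not help your surgery picture.

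In short: Step~1 is right, but the ``local surgery in disjoint shrinking balls'' strategy of Steps~2--4 does not work; the correct mechanism is to shrink a global graphical perturbation of $\TO_2$ whose intersection with $\TO_1$ is exactly $K$, and then glue via Lemma~\ref{glucalfor}.
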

\begin{rem}
At first sight, it might seem impossible that singular set $K$ can jump to become $\TO_3$ in the limit $g_j\to \de.$ In each $g_j,$ $K$ is defined  by a smooth function $j\m f$ defined on $\TO_3$ with $f\m(0)=K$. Thus limiting to $\de,$ we naturally obtain $\TO_3$ as the singular set
\end{rem}
	Second, it is not known a prior if the fractal singular sets we obtained will disappear under generic perturbations of metrics. We obtain an almost complete picture as follows.
	\begin{thm}\label{mod}
		For all the homologically area-minimizing currents (integral or mod $v$ with $v\ge 2$) we obtained in \cite{ZL} and Theorem \ref{prev}, the fractal singular set in strata of codimension larger than  $2$ mollify into submanifolds formed by transverse intersections under generic perturbations of metrics. 
	\end{thm}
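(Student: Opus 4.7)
The plan is a parametric-transversality argument. Each area-minimizer $T$ produced in \cite{ZL} and Theorem \ref{prev} admits a natural decomposition $T = T_1 + T_2$ (with iterated variants for nested strata), where the $T_i$ are smooth, embedded, oriented, calibrated $n$-submanifolds of the ambient $(m+n)$-manifold $M$, and the prescribed fractal singular set $K$ lies in the non-transverse locus of $T_1 \cap T_2$. The codimension hypothesis $m \ge 3$, equivalent to singular strata of codimension at least three in $\Si$, is precisely what makes a transverse intersection of the two sheets have positive codimension in each sheet, and hence be the generic local model.

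First I would show that for sufficiently small $C^k$ metric perturbations $g \rightsquigarrow g_\e$, the unique area-minimizer in the same homology class decomposes as $T^\e = T_1^\e + T_2^\e$, with each $T_i^\e$ a smooth minimal submanifold $C^\infty$-close to $T_i$. Each calibrated $T_i$ is nondegenerate as a minimal submanifold, so the implicit function theorem applied to the minimal-surface equation yields a unique smooth $T_i^\e$ near $T_i$ in each $g_\e$; uniqueness of the $g$-minimizer (from the calibration; cf.\ Theorem \ref{prev}) combined with compactness-regularity for area-minimizers under metric convergence then forces $T^\e = T_1^\e + T_2^\e$ for $\e$ small. I would then apply Sard--Smale transversality to the smooth family $\e \mapsto (T_1^\e, T_2^\e)$. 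The key input is that the map from metric perturbations to pairs of normal Jacobi-field perturbations of $(T_1, T_2)$ is pointwise surjective, proved by a partition-of-unity construction that independently pushes each sheet near any given point of $T_1 \cap T_2$; it follows that the universal intersection locus in $\mathcal{G} \times M$ meets the coincidence section transversely, and Sard for separable Banach manifolds yields a residual set of metrics in which $T_1^\e \cap T_2^\e$ is a smooth $(n-m)$-dimensional transverse-intersection submanifold that replaces the prescribed fractal $K$.

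The main obstacle is the first step: ruling out bubbling, recombination, or new singularity formation in $T^\e$ beyond the two-sheet model. This would follow by combining interior regularity (Almgren, De Lellis--Spadaro) with the fact that the regular set of $T$ has codimension at least two in $\Si$, so that in a tubular neighborhood of $T$ the competing minimizer $T^\e$ is locally a graph over one or two sheets of $T$ by $\e$-regularity and must therefore coincide with $T_1^\e + T_2^\e$. The sharpness at codimension two alluded to in the abstract is automatic from the scheme: a transverse intersection of two $n$-sheets in an $(n+2)$-ambient is itself $(n-2)$-dimensional and coincides with the top Almgren stratum, so any prescribed fractal subset of the sheet intersection already occurs in the generic model and survives perturbation.
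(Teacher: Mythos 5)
Your plan correctly identifies the two-sheet decomposition and the reduction to a structural stability statement followed by a transversality argument; the paper's route is structurally the same: it proves the stability in the form of an Allard-type theorem (Theorem \ref{modl}, using vanishing calibrations supported in angle neighborhoods and unique continuation for multiplicity-one minimizers), and then invokes White's generic transversality theorem (\cite{BW2}) for the final step. However, several steps in the proposal as written contain genuine gaps.

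First, the assertion that ``each calibrated $T_i$ is nondegenerate as a minimal submanifold'' is false in general. Calibrated submanifolds routinely carry nontrivial Jacobi fields (e.g.\ special Lagrangians, complex subvarieties); being uniquely mass-minimizing in a homology class is not the same as the second variation operator being invertible, so the implicit function theorem does not apply as stated. Second, your ``compactness-regularity\ldots then forces $T^\e = T_1^\e + T_2^\e$'' is precisely the hard content and cannot be dispatched by $\e$-regularity alone: near the self-intersection locus the density is $2$, so Allard's theorem gives no information there, and the real danger (Lawlor necks, i.e.\ the two sheets fusing into a single embedded neck under perturbation of the metric) is not excluded. This is exactly why Theorem \ref{modl} carries the explicit intersection-angle hypothesis $\Ta > 2\arctan\frac{2}{\sqrt{(d-\dim L_j)^2-4}}$; your argument never touches the angle, so it would also ``prove'' stability in situations where it is false. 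Third, the original construction in \cite{ZL} and Theorem \ref{prev} has a fractal, hence non-clean, self-intersection set. Theorem \ref{modl} is stated for \emph{clean} intersections, so the paper first replaces the auxiliary function $f$ by $\e_j f$ and sends $\e_j\to 0$, producing a clean-intersection limit current around which the Allard-type theorem can be applied. Your proposal skips this intermediary entirely and attempts to run the stability analysis directly at a current whose intersection set is fractal, where no such theorem is available.

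Finally, the closing paragraph on codimension two is mistaken. The codimension-two case is not ``automatic from the scheme'' and the fractal set does not ``survive perturbation'': the phenomenon that makes codimension two different is that the two sheets can merge into a single smooth sheet ($zw=0 \rightsquigarrow zw=\e$ in $\C^2$, or the holomorphic example $\{z_1 = 0\}\cap\{z_2z_3 = 0\}$ deforming to $\{z_1 = \e z_4\}\cap\{z_2z_3=\e z_4^2\}$ in the paper's remark), so the singular locus can disappear entirely rather than persist as a transverse intersection. The paper explicitly leaves codimension two as conjectural, which is inconsistent with your claim that it is a formal consequence of the scheme.
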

	\begin{rem}
		The codimension $2$ strata are much trickier. Conjecturally, under generic perturbations, the singular set will be submanifolds, but not necessarily formed by transverse intersection. (The heuristic is that $zw=0$ smoothify into $zw=\e$ in $\C^2$.)
	\end{rem}
	\begin{rem}
		Technically speaking, our constructions are always parameterized by smooth functions $f$ prescribing the intersection set. The result requires $f$ to have a small $C^2$ norm, which can always be achieved.
	\end{rem}
	The above theorem is a direct corollary of the following Allard-type theorem, which might be of independent interest.
	\begin{thm}\label{modl}
Let $d,c\ge 3,v\ge 2$ be integers and $T$ be a $d$-dimensional integral or mod $v$ homologically area-minimizing current in a $d+c$-dimensional compact not necessarily orientable Riemannian manifold $M^{d+c}$ with a metric $g$. Suppose $T$ satisfies the following conditions.
		\begin{itemize}
			\item $T$ comes from the immersion of a compact not necessarily connected not necessarily orientable manifold $\Si$.
			\item The immersion intersect itself cleanly  (Definition \ref{cleanint}) at compact closed not necessarily orientable submanifolds $L_1,\cd,L_k.$ 
			\item Near the self-intersection set of $T$, $T$ decomposes into the sum of two not necessarily connected embedded submanifolds.
			\item $d-\dim L_j\ge 3$ for all $j,$
			\item $\pd T$ does not intersect the self intersection set, and $T$ does not intersect $\pd M,$
			\item The angles of self-intersections (Definition \ref{intangm}) is always larger than $$2\arctan\frac{2}{\sqrt{(d-\dim L_j)^2-4}}.$$
		\end{itemize} Then there exists an open neighborhood $\Om_T$ of $g$ in the space of Riemannian metrics and an open set $U\s M$  containing the support of $T,$ with the following properties. 
		\begin{itemize}
			\item For any sequence $g_j\to g$ with $\{g_j\}\s \Om_T,$ let $T_j$ be any sequence of area-minimizing integral current in the metric $g_j,$ supported in $U$ and homologous to $T$. Then $T_j$ converges to $T$ smoothly as immersions of $\Si.$
			\item If $T$ is the unique minimizer, then we can take $U=M.$
		\end{itemize}
	\end{thm}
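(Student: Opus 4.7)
The plan is to establish Theorem \ref{modl} in three stages: a compactness/identification step, standard Allard regularity at smooth points of $T$, and a sharp $\e$-regularity theorem at the self-intersection submanifolds $L_j$. First, given $g_j\to g$ and area-minimizers $T_j$ in $(M,g_j)$ supported in $U$ and homologous to $T$, the homology class provides a uniform mass bound, so compactness for integral (respectively, mod $v$) currents produces a subsequential weak limit $T_\infty$. Lower semicontinuity of mass together with the admissibility of $T$ as a competitor in each $g_j$ shows that $T_\infty$ is area-minimizing in $(M,g)$ and homologous to $T$, and mass convergence $\cm(T_j)\to \cm(T)$ then follows. If $T$ is the unique minimizer, $T_\infty=T$ at once and one takes $U=M$; otherwise I would shrink $U$ to a sufficiently small tubular neighborhood of $\spt T$ so that local area-minimization of $T$ in $U$ forces $T_\infty=T$.

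Next, away from the self-intersection set $\bigcup_j L_j$, $T$ is the pushforward of a smooth embedded submanifold. Standard Allard regularity applied to the area-minimizing sequence $T_j$ with converging masses upgrades weak convergence to $C^\infty$ convergence on compact subsets of the immersion complement of $\bigcup_j L_j$. The entire remaining difficulty is local to points $p\in L_j$, where the tangent cone of $T$ is a sum $\cu{P_1}+\cu{P_2}$ of two $d$-planes meeting along $T_pL_j$ at the angle prescribed by the hypothesis.

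The third and main stage is an Allard-type $\e$-regularity theorem with a transverse pair of planes as model tangent cone, quantitative in the opening angle. The threshold $2\arctan\bigl(2/\sqrt{(d-\dim L_j)^2-4}\bigr)$ is precisely the spectral gap at which the Jacobi operator of $\cu{P_1}+\cu{P_2}$ becomes non-degenerate in the rotational directions, making this cone an isolated, integrable minimally stable critical point of the area functional among pair-of-plane configurations. The plan is to invoke, and adapt to the variable metric $g_j$, a Simon--Wickramasekera style cylindrical $\e$-regularity theorem with this pair-of-planes cone as linearized model: sufficient smallness of $L^2$ excess of $T_j$ over $\cu{P_1}+\cu{P_2}$ (which follows from weak plus mass convergence in a neighborhood of $L_j$) yields a $C^{1,\ai}$ decomposition of $T_j$ into two graphical sheets meeting along a $C^{1,\ai}$ submanifold close to $L_j$, with $C^\infty$ regularity then following by elliptic bootstrapping on the minimal surface system in $g_j$.

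The principal obstacle is precisely this last $\e$-regularity with sharp angle dependence. One must verify that above the stated threshold there are no nearby homogeneous competitor cones, such as branched double covers, higher-multiplicity planes, or other junction cones, and then drive a quantitative flat-norm decay iteration uniformly in the metric perturbation. The codimension hypothesis $d-\dim L_j\ge 3$ is what makes the pair-of-planes the correct leading-order model, sidestepping the Hopf-type codimension-$2$ obstructions flagged in the remark following the theorem; the explicit arctan threshold encodes simultaneously the non-degeneracy of the rotational Jacobi fields of $\cu{P_1}+\cu{P_2}$ and the integrability of the corresponding one-parameter deformations into nearby pair-of-plane configurations, so that the decay scheme actually closes.
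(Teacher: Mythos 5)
Your first two stages (compactness/identification in a sufficiently small tube $U$, and Allard regularity away from $\bigcup_j L_j$) are sound and broadly consistent with the paper's setup, which relies on exactly that kind of soft localization before addressing the crux near the intersection strata. The divergence, and the genuine gap, is in your third stage, and there are two distinct problems.

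First, you are pursuing a fundamentally different route than the paper. The paper does not prove a Simon--Wickramasekera-type cylindrical $\varepsilon$-regularity theorem near the pair-of-planes cone. Instead it follows the calibration scheme of \cite{ZL1}: in each small neighborhood of the intersection stratum it solves two Plateau problems to produce smooth sheets $X'$, $Y'$ graphical over the original sheets, constructs Lawlor-type \emph{vanishing calibrations} $\Psi_{X'}$, $\Psi_{Y'}$ supported in angle neighborhoods $W_{(r,\theta)}^M(X',L_X')$, $W_{(r,\theta)}^M(Y',L_Y')$ respectively (with the arctan threshold coming directly from Lawlor's angle criterion in \cite{GLi}, not from a Jacobi-operator spectral gap as you conjecture), and glues $\Psi_{X'}+\Psi_{Y'}$ into a calibration for $X'+Y'$ because Lemma \ref{ang} guarantees those angle neighborhoods meet only along $L_X'\cap L_Y'$. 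Unique continuation then identifies $X'+Y'$ with $T_j$ near the old singular set. This is a soft, constructive comparison argument — no excess-decay iteration or elliptic bootstrapping is invoked.

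Second, and more seriously, your third stage would not close even on its own terms, because it assumes that the nearby minimizer $T_j$ decomposes into two $C^{1,\alpha}$ sheets meeting along a $C^{1,\alpha}$ submanifold close to $L_j$. But this is exactly what the theorem's remark warns against: the self-intersection set of nearby minimizers in the moduli space can itself be fractal. The intersection of $X$ and $Y$ along $L_j$ is \emph{clean}, not transverse, so after perturbation one has no control on the structure of $X'\cap Y'$. The paper circumvents this by \emph{not} trying to locate $X'\cap Y'$; it introduces two separately-defined submanifolds $L_X'\subset X'$ and $L_Y'\subset Y'$ (produced by Lemma \ref{ang}) that merely bound the relevant angle neighborhoods, without ever asserting $L_X'=L_Y'=X'\cap Y'$. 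Any regularity-theoretic argument that models the nearby singular set as a single $C^{1,\alpha}$ junction, as yours does, contradicts the fractal-intersection phenomenon and cannot be correct. You would need either to import the paper's vanishing-calibration gluing, or to find a regularity iteration that proves smooth convergence of the two sheets as immersions without asserting anything about their mutual intersection; the latter is not a standard $\varepsilon$-regularity statement and would require substantial new work.
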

	\begin{rem}
		\textbf{Warning}: the self-intersection set of nearby minimizers in the moduli space can be fractal.
	\end{rem}
	\begin{rem}
		In the integral and mod $2$ case, the dimension $d$ lower bound is sharp due to \cite{BW} and \cite{BW2}. In the integral case, the codimension $d-\dim L_j$ lower bound is sharp by considering holomorphic subvarieties. For instance, projective variety in $\C\mathbb{P}^4$ induced by $\{z_1=0\}\cap \{z_2z_3=0\}\s \C^5$  with can be smoothed into $\{z_1=\e z_4\}\cap\{z_2z_3=\e z_4^2\}.$
	\end{rem}
	\begin{rem}
		The angle condition is of sharp asymptotic order in view of Lawlor's necks (\cite{GLi}), where the topology of nearby area-minimizing currents can change and self-intersection singularities can disappear when the angle equals $\frac{\pi}{d}.$ 
	\end{rem}
	Given Theorem \ref{mod}, the natural question to ask is whether the singularities will disappear totally under generic perturbations. Unfortunately, the answer is no.
	\begin{cor}\label{cte}
		For all the homologically area-minimizing integral currents $T$ in metric $g_T$ we obtained in \cite{ZL} and Theorem \ref{prev}, there exists an open sets $U$ in the ambient manifold and open sets $\Om_T$ in the space of smooth Riemannian metrics , so that
		\begin{itemize}
			\item 
			$\spt T\s U,g_T\in\ov{\Om_T},$
			\item for $g\in \Om_T$, the singular set of area-minimizing currents $T_g$ in $U$ is always nonempty and consists of no-empty transverse intersections for generic $g$.
			\item in case $T$ is the unique minimizer in its homology class, we can take $U=M.$
		\end{itemize}
	\end{cor}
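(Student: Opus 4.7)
The plan is to combine the smooth moduli-space description of Theorem \ref{modl} with a topological argument forcing the persistence of the self-intersection locus. First I would verify that Theorem \ref{modl} applies to each $T$ produced in \cite{ZL} and Theorem \ref{prev} (in the regime where the self-intersections have codimension $\ge 3$ in the sheets). In every such construction one has a decomposition $T=T_1+T_2$, each $T_i$ is carried by an embedded, not necessarily orientable submanifold $\Si_i$, the sheets meet cleanly along the prescribed singular set, and the intersection angles are arbitrarily close to $\pi/2$ and so lie comfortably above the threshold of Theorem \ref{modl}. Applying that theorem gives an open set $U\supset\spt T$ and an open set $\Om'$ of smooth metrics with $g_T\in\ov{\Om'}$ such that, for every $g\in \Om'$, any area-minimizer $T_g$ in $[T]$ supported in $U$ is the image of a smooth immersion $i_g\colon \Si_1\sqcup\Si_2\to M$ depending smoothly on $g$ and converging smoothly to $i_{g_T}$ as $g\to g_T$; one may take $U=M$ when $T$ is the unique minimizer.

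Next, Theorem \ref{mod} gives that the self-intersection of $i_g$ is transverse for generic $g\in \Om'$, and I would let $\Om_T = \Om'\setminus\{g_T\}$. It remains to show nonemptiness of the singular set for \emph{every} $g\in \Om_T$, not merely the generic ones. Because $i_g$ is a smooth deformation of $i_{g_T}$, the classes $[i_g(\Si_1)]$ and $[i_g(\Si_2)]$ are constant in $g$ in $H_\ast(M;R)$ with $R=\Z$ or $R=\Z/v\Z$ as appropriate. It therefore suffices to verify
\begin{equation*}
[\Si_1]\ip[\Si_2]\neq 0\quad\text{in}\quad H_\ast(M;R);
\end{equation*}
for an empty set-theoretic intersection forces a vanishing cup product in the complement via the inclusion $i_g(\Si_1)\hookrightarrow M\setminus i_g(\Si_2)$. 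In the toral setting of Theorem \ref{tor} the intersection class is Poincar\'e-dual to (a sign and torsion twist of) the fundamental class of $\TO_3$, hence nonzero. In the constructions of \cite{ZL} and Theorem \ref{prev}, the ambient manifold is built as the total space of a vector bundle or product in which the two sheets meet transversely along a prescribed core submanifold, and a Thom-isomorphism computation identifies the intersection class with the fundamental class of that core, which is nonzero by design.

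The main technical obstacle is this homological verification, which must be carried out uniformly across the integral, mod $v$, non-orientable and twisted-coefficient variants of \cite{ZL} and Theorem \ref{prev}; in each case it reduces to a Thom/excess-intersection computation in the bundle-of-pairs model on which the construction is based. Once the non-vanishing is in hand, the openness of nonempty intersection under $C^0$-small deformation yields nonemptiness of the singular set for every $g\in \Om_T$, and the generic transversality furnished by Theorem \ref{mod} completes the description for generic $g$.
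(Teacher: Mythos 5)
The central flaw is your choice $\Om_T = \Om' \setminus \{g_T\}$. The corollary requires the singular set to be nonempty for \emph{every} $g \in \Om_T$, but a punctured open neighborhood of $g_T$ is too large for that: the paper itself observes, in the paragraph following Corollary \ref{cte}, that under specific perturbations of the metric the singularity of $T$ can dissolve entirely. So there are metrics $g$ arbitrarily close to (but distinct from) $g_T$ for which the unique minimizer is smooth, and such metrics would lie in your $\Om_T$. Consequently there can be no homological obstruction of the kind you invoke.

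There are two concrete reasons your intersection-class argument is unfounded. First, the constructions of \cite{ZL} and Theorem \ref{prev} do not produce a global decomposition $T = T_1 + T_2$ into two embedded pieces: after the connected-sum step $\Si \# \si(X) \# Y_f$ the minimizer is carried by a single connected immersion, and the two-sheet splitting holds only locally near the self-intersection set. There are no well-defined classes $[\Si_1],[\Si_2]$. Second, even working with the double-point class of the connected immersion does not help, because it vanishes: the entire singular configuration is grafted inside a coordinate ball $B_2^{m+n}$ about a point $q \notin \Si$, so $\si(X)$ and $Y_f$ are nullhomologous in $M$, and no Thom-class or excess-intersection computation will detect them.

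The paper's proof is constructive and sidesteps these pitfalls. It perturbs the auxiliary function $f$ prescribing the self-intersection set to $f_j = f + j^{-1}\be\, x_1$, which has a transverse zero near a chosen point $p \in f^{-1}(0)$, so the current built from $f_j$ has a nonempty, locally transverse self-intersection by construction. For each $j$ one chooses a small open set $\Om_j$ of metrics around the metric $g_j$ associated to $f_j$; by Theorem \ref{modl} together with uniqueness of the minimizer, any minimizer for $g \in \Om_j$ stays close in the immersion topology to this configuration and therefore still has nonempty singular set once $\Om_j$ is shrunk. The set $\Om_T := \bigcup_{j\ge 1}\Om_j$ is a union of small balls accumulating at $g_T$, \emph{not} a full punctured neighborhood, which is exactly what makes the ``nonempty for all $g \in \Om_T$'' clause true. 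Generic transversality inside $\Om_T$ is then supplied by Theorem 1 of \cite{BW2}. If you want to retain some of your viewpoint, what replaces the global homological obstruction is a \emph{local} stability statement — Theorem \ref{modl} — applied to each nearby configuration that has been engineered to have a transverse self-intersection, rather than a topological invariant of $T$ itself.
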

	Thus, the singularities will not disappear totally under generic perturbations, unless transversality gives an empty intersection. However, as remarked in \cite{ZL}, under specific perturbations of the metric, the singularity will disappear totally.
	
	Another caveat of Theorem 1.2 in \cite{ZL} is that the singular sets of different strata are disjoint. As an improvement, we show that different strata of fractal dimension can limit to each other.
	\begin{thm}\label{torus}
		\begin{itemize}\hspace{2em}
			\item Let $\TO_1,\cd,\TO_k\s \TO^{d+c}$ be $k$ pairwise orthogonal $d$-dimensional subtorus of the standard $d+c$-dimensional torus, with $k\ge 2,d\ge 3,c\ge 2,$
			\item $\TO_1,\cd,\TO_k\s \TO^{d+c}$ are in general position, i.e., having linearly orthogonal complements (Definition 19 in \cite{BW2}),
			\item Let $N^n$ be a compact closed connected orientable smooth submanifold of a compact $m+n$-dimensional closed not necessarily orientable smooth Riemannian manifold $M,$
			\item $N$ belongs to a nontrivial torsion-free homology class in $M.$
			\item $N_1,\cd,N_k$ are $C^2$ small perturbations of $N.$
		\end{itemize}
		Then there exists a smooth metric on $\TO^{d+c}\times M,$ so that 
		\begin{align*}
			T=	\sum_j \TO_j\times N_j
		\end{align*}
		is homologically area-minimizing. 
	\end{thm}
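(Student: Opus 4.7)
The plan is to produce on $\TO^{d+c}\times M$ a closed $(d+n)$-form $\Phi$ and a Riemannian metric $G$ such that $\Phi$ calibrates $T=\sum_j\TO_j\times N_j$; area-minimization of $T$ then follows from the standard calibration inequality. The construction combines the Brakke--White calibration of $\sum_j\TO_j$ in the flat torus with a calibration of $N$ in $M$ supplied by the torsion-free hypothesis, the latter of the same kind used in the proof of Theorem \ref{prev}.

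\emph{Setup.} Let $\phi_j$ be the constant-coefficient $d$-form on $\TO^{d+c}$ equal to the volume form of $T\TO_j$ in the flat metric $\de$; by \cite{BW2}, the linearly-orthogonal-complements condition gives $\cms_\de(\sum_j\phi_j)=1$, and pairwise orthogonality of the $T\TO_j$'s forces the pointwise vanishing $\phi_i|_{T\TO_j}=0$ for $i\ne j$ (the kernel $T\TO_j\cap(T\TO_i)^\perp$ of the orthogonal projection $T\TO_j\to T\TO_i$ is always positive-dimensional). The torsion-free hypothesis together with the calibration construction underlying Theorem \ref{prev} furnishes a smooth metric $g_0$ on $M$ and a closed $n$-form $\psi_0$ calibrating $N$ in $(M,g_0)$. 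Since each $N_j$ is a $C^2$-small perturbation of $N$, a standard perturbation argument yields smooth metrics $g_j$ and closed calibrations $\psi_j$ of $N_j$ in $(M,g_j)$ with $\|g_j-g_0\|_{C^2}+\|\psi_j-\psi_0\|_{C^2}\le\e$ for any preassigned small $\e>0$.

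\emph{Assembly.} Define
$$\Phi=\sum_{j=1}^{k}\pi_1^*\phi_j\w\pi_2^*\psi_j,$$
a closed $(d+n)$-form since each summand is a wedge of closed forms. On $T_p\TO_j\oplus T_qN_j$ at any $(p,q)\in\TO_j\times N_j$, the identity $\phi_i|_{T\TO_j}=0$ for $i\ne j$ eliminates the cross terms, so $\Phi$ restricts to $\phi_j\w\psi_j$, which is the volume form of $T(\TO_j\times N_j)$ in the product metric $\de\oplus g_j$. Take the ambient metric $G=\de\oplus g$, with $g$ a smooth metric on $M$ agreeing with $g_j$ on a tubular neighborhood of $N_j$ and smoothly interpolated elsewhere. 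Writing
$$\Phi=\Big(\sum_j\phi_j\Big)\w\psi_0+\sum_j\phi_j\w(\psi_j-\psi_0),$$
the first term has unit comass in $\de\oplus g_0$ as a product of calibrations, while the second contributes $O(\e)$ uniformly; a joint rescaling of $g$ and of $\Phi$ by factors $1+O(\e)$ absorbs the error so that $\cms_G\Phi=1$ and $\Phi|_T=|T|_G$ pointwise on the support of $T$.

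\emph{Main obstacle.} The central technicality is coordinating the interpolation of $g$ between the various $g_j$'s with the single global normalization that makes $\cms_G\Phi=1$. This is viable only because the $C^2$-smallness of the perturbations $N_j$ of $N$ forces every error to be uniformly of the single order $\e$, and because the pointwise orthogonality identity $\phi_i|_{T\TO_j}=0$ eliminates cross terms exactly, not merely in sum as in the naive Brakke--White calibration of $\sum_j\TO_j$ alone. Once $\cms_G\Phi=1$ and $\Phi|_T=|T|_G$, the calibration inequality gives the desired homological area-minimization of $T$.
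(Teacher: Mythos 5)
Your approach — assembling $\Phi=\sum_j\pi_1^*\phi_j\w\pi_2^*\psi_j$ from separate calibration pairs $(g_j,\psi_j)$ of each $N_j$ and interpolating — has genuine gaps, and it is not the route the paper takes.

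The first and most concrete gap is in the metric assembly. You want $g$ on $M$ to agree with $g_j$ on a tubular neighborhood of $N_j$. But the $N_j$ are, by hypothesis, $C^2$-small perturbations of a single $N$: their tubular neighborhoods all overlap (essentially every one of them contains $N$ itself). There is no smooth metric on $M$ that simultaneously equals $g_1$ near $N_1$, $g_2$ near $N_2$, etc., because those regions are not disjoint and the $g_j$ are generally different on the overlap. So the crucial calibration identity $\Phi|_{T(\TO_j\times N_j)}=dV^G$ fails for all but one $j$, and the whole construction collapses. This is not a removable technicality: it is exactly why the paper refuses to introduce separate metrics on $M$ for the separate $N_j$, and instead fixes one auxiliary metric $h_N$ on $M$ once and for all.

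The second gap is the "joint rescaling by $1+O(\e)$" at the end. A calibration must satisfy two exact conditions simultaneously: $\cms_G\Phi\le 1$ everywhere and $\Phi|_T=dV^G_T$ pointwise on $\spt T$. Writing $\Phi$ as a main term plus an $O(\e)$ error only yields $\cms_G\Phi\le 1+O(\e)$; rescaling to push the comass down to $1$ also scales $\Phi|_T$ strictly below $dV^G_T$, destroying the calibration. There is no amount of "$\e$ small" that fixes this, because the calibration inequality is exact, not perturbative. You would need to arrange the identities to hold on the nose, which is precisely the extra work your sketch skips.

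A third, related point: even the comass-one claim for the main term $(\sum_j\phi_j)\w\psi_0$ needs justification. The comass of a wedge of forms on the two factors of a product equals the product of the comasses only under additional hypotheses (e.g., one factor simple); $\sum_j\phi_j$ is not simple for $k\ge 2$, so you would need $\psi_0$ simple, which you did not impose.

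The paper's actual argument avoids all of this. It uses a single metric $h_N$ on $M$ and simple pullback volume forms $\pi_j\du\om_j$ for each $N_j$, setting
$\phi=\sum_j X_j\du\w\pi_j\du\om_j$.
The slack is absorbed entirely into the torus-factor metric: one writes $g=\sum_l\lam_l\,dx_l^2+h_N$ with unknown positive functions $\lam_l$, expands $\phi$ in a $g$-orthonormal coframe, and requires every coefficient to equal $1$ exactly. This becomes a determined linear system in $\log\lam_l$ (after collapsing repeated columns using the "linearly orthogonal complements" hypothesis), whose coefficient matrix is $vv^T-\id$ with nonzero determinant $k-1$. With those $\lam_l$, Lemma \ref{prodcal} gives $\cms_g\phi=1$ on the nose, and Lemma 2.7 in \cite{ZL1} glues this local model into a global metric and calibration. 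No perturbation, no rescaling — the exact identities you cannot get from your approximate argument are built in by design.
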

	\begin{rem}
		The singular set of  $T$ is the union of pairwise intersection among $\{\TO_j\times N_j\}.$	The intersection of $\{\TO_j\}$ alone can already be very complicated, with different strata limiting to other ones.  The addition of $N_j$ factors makes the singular set more complicated. For example, we can let different $N_j$ having fractal intersection, e.g., graph over each other with fractal zero sets. 
	\end{rem}
	Finally, since one-dimensional compact manifolds are just a disjoint union of circles, the result for the 1st strata in the Almgren stratification does not give as much interesting ambient topology for the singular set as other strata. We improve the result in the first strata to the following.
	\begin{thm}\label{slag}
		Given any (not necessarily connected) finite graph $G$ in the combinatorial sense, let $K$ be a compact subset of $G,$ which contains a neighborhood of each vertex. Let $M$ be a smooth $6$-dimensional manifold with a nontrivial $3$-dimensional homology class $[\Si].$ Then there exists a smooth metric $g$ on $M,$ so that the unique area-minimizing current $T$ in the class $[\Si]$ has $K$ as the singular set. Moreover  $K$ is precisely the $1$-strata in the Almgren stratification of $T$. 
	\end{thm}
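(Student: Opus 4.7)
\textbf{Proof plan for Theorem \ref{slag}.}

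The plan is to extend the first-stratum construction of Theorem 1.2 in \cite{ZL}, which prescribes the $1$-strata as a subset of a smooth $1$-manifold, to the $1$-complex setting by modeling each vertex of $G$ on a calibrated union of special Lagrangian $3$-planes in $\C^3$. The dimensions $d=3$ and codimension $3$ place us in the Calabi--Yau $3$-fold regime, where special Lagrangian geometry provides the natural local models for higher-valence singularities.

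First, I would realize $G$ topologically as the self-intersection locus of an immersed representative of $[\Si]$. Starting from a smooth embedded representative $\Si_0$ of $[\Si]$, whose existence follows from $[\Si]$ being a nontrivial torsion-free class in a $6$-manifold and classical obstruction theory, I would construct auxiliary sheets $\Si_1,\cd,\Si_k$ by controlled perturbations so that the pairwise intersections $\Si_i\cap\Si_j$ trace the interiors of the edges of $G$, and each vertex $v$ of degree $d_v$ is realized as a common intersection point of an appropriate subcollection of sheets whose pairwise intersection lines at $v$ match the $d_v$ prescribed edge directions. Exploiting the freedom to choose the edge directions in the embedding of $G$, the $3$-plane tangents at each vertex can be arranged to be special Lagrangian in $T_vM\is\C^3$, since the $5$-dimensional moduli $SU(3)/SO(3)$ of SL planes permits prescribed pairwise intersection lines and any union of SL planes through a common point is calibrated by $\rl\,\Om$.

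Second, using the cutoff-and-interpolation framework already developed in \cite{ZL} and applied in Theorem \ref{prev}, I would deform the ambient metric $g$ and assemble a smooth closed $3$-form $\phi$ that equals $\rl\,\Om$ in local SL coordinates near each vertex and interpolates consistently with the two-sheet SL models along edges, so that $\phi$ strictly calibrates $T=\sum\Si_i$. Strictness of $\phi$ (calibrating only $T$) yields uniqueness of the area-minimizer in $[\Si]$. To cut the singular set down from $G$ to the prescribed $K$, I would apply the fractal refinement of \cite{ZL}: along each edge $e$ of $G$, replace one of the two sheets meeting at $e$ by the graph over the other of a smooth function $f_e$ with $f_e\m(0)=K\cap e$. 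The hypothesis that $K$ contains a neighborhood of each vertex in $G$ is precisely what forces $f_e$ to vanish identically near the endpoints of $e$, so the SL vertex models, the global smoothness, and the calibration $\phi$ are all preserved. The singular set of the resulting $T$ is exactly $K$: interior points of edges in $K$ have tangent cone a pair of $3$-planes sharing a line ($1$-symmetric), while each vertex has tangent cone a union of SL $3$-planes with no common translation direction ($0$-symmetric); both types lie in $S_1$, so $K$ is precisely the $1$-strata of the Almgren stratification of $T$.

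The main obstacle is the simultaneous vertex SL configuration and global calibration patching: for an arbitrary finite graph one must prescribe compatible SL moduli data at every vertex, arrange the edge directions and ambient embedding so that different vertex models are consistent along shared edges, and then extend the single-edge calibration-patching apparatus of \cite{ZL} to a $1$-complex. Once this bookkeeping is in place, the fractal refinement and the stratum identification proceed essentially as in \cite{ZL}.
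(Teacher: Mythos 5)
Your high-level strategy — special Lagrangian models at vertices, the fractal construction from \cite{ZL} along edge interiors, and calibration patching to glue them — is the same strategy the paper uses, and your observation that $K\supset$ vertex-neighborhoods forces $f_e\equiv 0$ near vertices is exactly the right reason the two constructions are compatible. But there are two substantive gaps between your outline and a proof.

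First, the vertex models. You propose realizing a vertex of degree $d_v$ as a common point of sheets $\Si_1,\ldots,\Si_k$ whose pairwise intersections give the $d_v$ edge lines, each tangent plane special Lagrangian in $T_vM\is\C^3$. Two generic SL $3$-planes in $\C^3$ intersect only at the origin, so demanding that the chosen pairs intersect in $1$-dimensional lines while remaining SL is a nontrivially constrained problem, and it is not established that for every $d_v$ the prescribed collection of edge lines can be achieved by pairwise intersections of SL $3$-planes. The paper sidesteps this entirely: it invokes \cite{ZL0} (the author's earlier paper, "Every finite graph arises as the singular set of a compact $3$-d calibrated area minimizing surface"), which realizes arbitrary graph vertices via special Lagrangian \emph{cones} — not necessarily unions of planes — and has already done the vertex bookkeeping you describe as "the main obstacle." Without importing \cite{ZL0} you would need to reprove that result, and your plane-union ansatz is weaker than what is actually used there.

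Second, the transition. You describe the glue as "the cutoff-and-interpolation framework already developed in \cite{ZL}," but that framework does not directly apply here: near a vertex the calibration must be the special Lagrangian form $\rl\,\Om$, while in the middle of an edge the \cite{ZL} fractal construction requires the much simpler product-type form $dx_1\w dx_2\w dx_3 - dy_1\w dy_2\w dx_3$. These two forms are not related by the exactness-plus-small-perturbation mechanism of Lemma \ref{glucalfor}; the new content of the paper's Section 5 is precisely Lemma \ref{ins}, a two-stage transition (first adding/removing the $dy_3$ terms by an explicit exact correction with a carefully controlled comass estimate, then anisotropically rescaling the metric in the $y_3$-direction so the residual $dy_3$ terms become harmless). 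Your proposal does not identify this as a step that needs an argument; it is in fact the technical heart of the theorem.

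The remaining parts of your plan (representing $[\Si]$ by an embedded submanifold via Thom's theorem rather than "obstruction theory," strict calibration giving uniqueness, and the $1$-stratum identification with vertices being $0$-symmetric and edge points $1$-symmetric) are fine and agree with the paper.
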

	\begin{rem}
		Recall that by the results in \cite{NV}, the $j$-th strata of a stationary varifold is $j$-rectifiable. Since $1$-rectifiable sets are covered a.e. by countable unions of parts of Lipschitz curves, we can say roughly that $1$-rectifiable sets are subsets of infinite degree combinatorial graphs. Thus, the above theorem is genuinely much stronger than Theorem 1.2 in \cite{ZL}, and gets much closer to realizing all $1$-rectifiable sets. It is tempting to conjecture that for any Riemannian manifold $M,$ all $1$-rectifiable sets can be realized as the first strata in the Almgren stratification of an area-minimizing current.
	\end{rem}
	\subsection{Sketch of the proof}
	All proofs are technical extensions of the results in \cite{ZL} and \cite{ZL1} by considering more general geometric models. 
	\section*{Acknowledgements}
	The author acknowledges the support
	of the NSF through the grant FRG-1854147.
	I cannot thank my advisor Professor Camillo De Lellis enough for his unwavering support while I have been recovering from illness. I feel so lucky that I have Camillo as my advisor. I would also like to thank Professor Frank Morgan for his encouragement and support.
	\section{Basic notions and preliminaries}
	In this section, we will fix our notations and give some useful lemmas.
	\subsection{Inner product vector spaces}
	Let $V$ be a $m$-dimensional vector space equipped with an inner product. Suppose $P$ is an oriented $p$-dimensional linear subspace of $V.$ Then we will also use $P$ to denote the $p$-dimensional simple vector
	\begin{align*}
		P=e_1\w\cd\w e_p,
	\end{align*}where $e_1,\cd,e_p$ is a set of oriented orthonormal frame of $P.$ \begin{defn}
		The dual form of $P$ is defined as
		\begin{align}
			P\du=e_1\du\w\cd\w e_p\du,
		\end{align}
		where $e_j\du$ is the $1$-form $\ri{e_j,\cdot}$.
	\end{defn} 
	It is straightforward to verify that $P$ and $P\du$ do not rely on the orthonormal frame we choose.
	\subsection{Euclidean space and torus}
	Let $\R^{m+n}$ be a Euclidean space endowed with the standard inner product. Most of the time, it is convenient to split $\R^{m+n}$ into a product of subspaces, i.e., $\R^m\times \R^n.$ We will use $x,y,a,b,$ etc, to denote coordinate labels of each factor.
	\begin{defn}
		The standard torus $\TO^{m+n}$ formed from $\R^{m+n}$ is defined to be the Riemannian quotient of $$\TO^{m+n}=\R^{m+n}/\Z^{m+n},$$ where the standard $\Z^{m+n}$ acts by translation. A $d$-dimensional subtorus $\TO$ of $\TO^{m+n}$ is an embedded $d$-dimensional torus that can be lifted to a $d$-dimensional subspace in $\R^{m+n}.$  
		By abusing notations, we will sometimes refer to the subtorus by its lifted subspace and vice versa. For subtorus passing through zero, we can talk about the span of several subtorus by considering the span of their lifts and the corresponding subtorus, etc.
	\end{defn}
	It is straightforward to verify that 
	\begin{align*}
		\TO^{m+n}=\TO^m\times\TO^n,
	\end{align*}
	as Riemannian products.
	
	A simple fact we will use is that any constant differential form on $\R^{m+n}$ descends onto $\TO^{m+n}.$ 
	
	\subsection{Manifolds and neighborhoods}
	We will reserve $M$ to denote an ambient smooth Riemannian manifold. If not mentioned, then we do not put any orientability or compactness assumptions on $M.$ Submanifolds will be denoted by $X,Y,L,N$, etc. We will use the following sets of definitions and notations often.
	\begin{defn}
		Let $N$ be a smooth submanifold of a Riemannian manifold $M.$
		\begin{itemize}
			\item $B_r^M(N)$ denotes a tubular neighborhood of $N$ inside $M$ in the intrinsic metric on $M$ of radius $r$.
			\item $\T_p M$ denotes the tangent space to $M$ at $p.$ We will often regard $\T_p N$ as a subspace of $\T_p M.$
			\item $\exp^\perp_{N\s M}$ denotes the normal bundle exponential map of the inclusion $N\s M.$
			\item $\fr_N^M$ is the focal radius of $N$ in $M,$ i.e., the radius below which the normal bundle exponential map remains injective.
			\item $\pi_N^M$ denotes the nearest distance/normal bundle exponential map projection from $M$ to $N$ in the metric intrinsic of $M$ in $B_r^N(M)$ with $r$ less than the focal radius of $N$ inside $M$.
			\item $\dis_M(p,q)$ is the distance between two points with respect to the intrinsic measure on $M.$
			\item $d_N(p,q)$ is the distance between $p,q$ with respect to the intrinsic distance on $N.$
		\end{itemize}   
	\end{defn}
	\begin{defn}\label{angn}
		For the inclusion of submanifolds $L\s X\s M$ of different dimensions, we define the angle $\ta$ neighborhood of radius $r$  of $X$ inside $M$ with vertex $L$ as follows,
		\begin{align*}
			W_{(r,\ta)}^M(N,L)=\{p|d_M(p,L)\le r,d_M(p,X)/d_X(\pi^M_X(p),L)\le\tan\ta\}.
		\end{align*}
	\end{defn}
	\subsection{Immersions and intersections}
	\begin{defn}\label{cleanint}
		We say that two submanifolds $X,Y$ of dimension $d$ intersect cleanly at a submanifold $L$ of dimension $l$ if for every point of $L,$ there exists a coordinate neighborhood so that $X,Y$ become two coordinate $d$-dimensional planes intersecting at an $l$-dimensional plane. 
	\end{defn}
	If we have two planes $P_1$ and $P_2$ intersecting non-trivially inside a Euclidean space, then $P_1\cup P_2$ splits off as a product of their intersection $P_1\cap P_2$ with two planes $P_1'$ and $P_2'$ intersecting only at the origin in a lower dimensional Euclidean space.
	\begin{defn}\label{ints}
		The intersection angle $\Ta(P_1,P_2)$ is defined to be \begin{align*}
			\Ta(P_1,P_2)=\inf_{v\in P_1',w\in P_1'}\arccos\frac{|\ri{v,w}|}{|v||w|}.
		\end{align*}
	\end{defn}
	\begin{defn}\label{intangm}\hspace{2em}
		Let $X$ and $Y$ be two submanifolds of $M$ intersecting cleanly along a submanifold $L,$ then the intersection angle of $X$ and $Y$ is defined as follows,
		\begin{align*}
			\Ta(X,Y)=\inf_{p\in L}\Ta(\T_pX,\T_pY).
		\end{align*}
	\end{defn}
	The connection between intersection angle and angle neighborhoods is as follows. 
	\begin{defn}
		Let $L$ be a compact boundaryless submanifold of a compact submanifold $X.$ Define exponentiated angle neighborhood of $X$ at $L$ as follows,
		\begin{align*}
			\exp^{\perp}W_{(r,\ta)}^M(X,L)=\bigcup_{p\in L}\exp^\perp_{X\s M}W_{(r,\ta)}^{\T_pM}(\T_p X,\T_p L).
		\end{align*} 	 
	\end{defn}
	\begin{rem}
		It is clear from the definitions that 
		\begin{align*}
			W_{(r,\ta)}^{\T_pM}(\T_p X,\T_p L)=\bigcup_{\substack{V\textnormal{ is a }\dim X\textnormal{ subspace of }\T_p M\\ \Ta(V,\T_p X)\le \ta}}V.
		\end{align*}
	\end{rem}
	\begin{lem}\label{intang}
		For any $\frac{\pi}{2}>\ta_0>0,$ there exists $r_0>0,$ so that we have 
		\begin{align*}
			W_{(r,\ta)}^M(X,L)\s \exp^\perp W_{(r,\arctan[\tan\ta+O({r_0})])}^M(X,L),
		\end{align*}
		with big $O$ depending geometrically on $L,X,M.$
	\end{lem}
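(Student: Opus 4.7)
The plan is to pull back via a two-step Fermi-type normal exponential along $L \s X \s M$, reducing the intrinsic inclusion to the linear-algebra description recorded in the remark preceding the lemma, up to curvature corrections of order $O(r_0)$ from Jacobi field comparison.

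I would first fix $r_0 > 0$ smaller than both $\fr^X_L$ and $\fr^M_X$, so that for $r \le r_0$ the nearest-point projections $\pi^M_X$ and $\pi^X_L$ are smoothly well-defined on the relevant tubular neighborhoods. For each $p_0 \in L$, the orthogonal decomposition
\begin{align*}
\T_{p_0}M = \T_{p_0}L \op (N_{p_0}L \cap \T_{p_0}X) \op N_{p_0}X
\end{align*}
generates a two-step normal exponential $\Phi_{p_0} : \T_{p_0}M \to M$: first $\exp^X_{p_0}$ on the $\T_{p_0}X$-component of $w$ reaches some $q \in X$; then parallel transport of the $N_{p_0}X$-component along the resulting $X$-geodesic, followed by $\exp^\perp_{X \s M}$, lands at the image point. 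The map $\Phi_{p_0}$ is a diffeomorphism onto a neighborhood of $p_0$, coincides with $\exp^\perp_{X \s M}$ on its last leg, and satisfies $d\Phi_{p_0}(0) = \id$.

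Given $p \in W^M_{(r, \ta)}(X, L)$, set $q = \pi^M_X(p)$, $p_0 = \pi^X_L(q)$, and $w = \Phi_{p_0}\m(p)$, decomposed as $w = w_L + w_X^\perp + w_\perp$ with respect to the three summands. The $X$-minimality of $p_0$ forces the $X$-geodesic from $p_0$ to $q$ to be orthogonal to $\T_{p_0}L$, hence $w_L = 0$. Applying the Gauss lemma on each leg of $\Phi_{p_0}$ yields the exact identities
\begin{align*}
|w_X^\perp| = d_X(q, L), \qquad |w_\perp| = d_M(p, X),
\end{align*}
so the angular hypothesis $d_M(p,X)/d_X(q,L) \le \tan\ta$ translates without any loss into $|w_\perp|/|w_X^\perp| \le \tan\ta$.

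For the radial bound, Jacobi-field comparison between $\Phi_{p_0}$ and its flat linear model converts $d_M(p, L) \le r$ into $|w_X^\perp + w_\perp| \le r(1 + O(r_0))$, with the $O(r_0)$ uniformly controlled by the ambient curvature and the second fundamental forms of $L \s X$ and $X \s M$. Shrinking $r_0$ small enough absorbs this $1+O(r_0)$ radial factor into an additional $O(r_0)$ angular enlargement, placing $w$ inside the linear angle neighborhood $W^{\T_{p_0}M}_{(r, \arctan[\tan\ta + O(r_0)])}(\T_{p_0}X, \T_{p_0}L)$. By the remark preceding the lemma, $w$ then lies in some $\dim X$-dimensional subspace $V \s \T_{p_0}M$ containing $\T_{p_0}L$ with $\Ta(V, \T_{p_0}X) \le \arctan[\tan\ta + O(r_0)]$; applying $\Phi_{p_0}$ and using that its last leg is $\exp^\perp_{X \s M}$ yields $p \in \exp^\perp W^M_{(r, \arctan[\tan\ta + O(r_0)])}(X, L)$, as claimed. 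The main technical obstacle is the uniform-in-$p_0$ Jacobi-field distortion estimate for $\Phi_{p_0}$; this is a standard but finicky comparison argument requiring uniform control over all relevant ambient and extrinsic geometry, and is precisely where the geometric dependence of the $O(r_0)$ constant originates.
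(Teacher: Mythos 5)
Your approach genuinely differs from the paper's. You pull $p$ back through a two-step Fermi map $\Phi_{p_0}$ (intrinsic exponential on $X$ from $p_0 = \pi^X_L(q)$, then normal exponential from $q$ out of $X$), and you correctly observe that the Gauss lemma on each leg gives $|w_X^\perp| = d_X(q,L)$ and $|w_\perp| = d_M(p,X)$ exactly, so the angular ratio $|w_\perp|/|w_X^\perp| \le \tan\ta$ is preserved with no error. The paper instead pulls $p$ back through the single normal exponential $\exp^\perp_{L\s M}$ of $L$ in $M$; that map preserves $d_M(p,L) = |v|$ exactly (so the radial constraint $|v| \le r$ is inherited for free), and the $O(r_0)$ shows up in the angular ratio, which the paper bounds by a Taylor comparison of intrinsic distances in Fermi coordinates about $L$. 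In other words, you have traded an exact angle for an inexact radius, while the paper has an exact radius and an inexact angle. The lemma as stated requires the second trade.

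This is exactly where your argument has a gap. As you yourself record, your radial bound after the two-step Fermi pullback is $|w| \le r(1 + O(r_0))$, not $|w| \le r$. Your claim that one can ``absorb this $1+O(r_0)$ radial factor into an additional $O(r_0)$ angular enlargement'' is false: since $w_L = 0$, membership of $w$ in $W^{\T_{p_0}M}_{(r,\ta')}(\T_{p_0}X, \T_{p_0}L)$ requires $d_{\T_{p_0}M}(w,\T_{p_0}L) = |w| \le r$, and this holds or fails \emph{independently} of $\ta'$ — increasing the angle parameter adds vectors in more directions but never at larger norm. A $w$ with $r < |w| \le r(1 + O(r_0))$ therefore lies outside the target cone for \emph{every} $\ta'$, and the stated containment does not follow. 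To repair this along your route you would either have to switch to $\exp^\perp_{L\s M}$ as the pullback (which kills the radial error but re-introduces the angular one, forcing you to redo the Fermi-coordinate distance comparison you were trying to avoid — essentially the paper's proof), or replace the right-hand side of the lemma with radius $r(1+O(r_0))$, proving a weaker statement than the one given (though still serviceable for the disjointness application in the next lemma). As a secondary point, your $\Phi_{p_0}$ is also not on its face the map appearing in the definition of $\exp^\perp W^M_{(r,\ta)}(X,L)$, so you would additionally need to argue the two exponentiation schemes give the same set up to the tolerated errors; the paper's own proof sidesteps this by working with $[\exp^\perp_{L\s M}]^{-1}$ directly.
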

	\begin{proof}
		By definition, it suffices to prove this for $r=r_0,\ta=\ta_0.$ First, let $r_0<\min\{\fr_L^X,\fr_L^M,\fr_X^M\}.$ For $r_0$ small enough, $X$ is graphical over $\exp^\perp_{L\s M} \T X,$ with zero differential at $L.$ For any point in $L,$ take a normal coordinate with respect to the intrinsic metric on $L,$ then establish a Fermi coordinate using orthonormal frames in the normal bundle of $L.$ By compactness of $L,$ take a finite subcover. Then direct calculation using Taylor's theorem gives that $\dis_M(p,q),\dis_X(p,q)=(1+O(\max{|p|,|q|}))|p-q|$ in the coordinate patches, with $|\cdot|^2$ the sum of coordinate components squared. Thus, for a point $p\in \exp^\perp_{L\s M} \T X$ of distance $\rh$ to $L$, there is a point $p_X\in X$ with $\dis_X(L,p_X)=O(\rh)^2$ and vice versa. Suppose $q$ is a point in $W_{(r,\ta)}.$ Define
		$
		q_X=\pi_X^M(q),q_L=\pi_L^X(q_X),v=[\exp^\perp(L\s M)]\m(q).
		$ Let $\ell$ be the point in $L$ with $v\in \T_\ell M.$ Then define $v_X=\pi_{\T_\ell X}^{\T_\ell M}(v),
		p_X=\exp^\perp_{L\s M}(v_X).$ By the previous discussion and the fact that $\pi$ is nearest distance projection, we deduce that $\dis_M(p_X,q_X)=O(\dis_M(q_X,L)^2)=O({\ri{v_X,v_X}})$. We have
		\begin{align*}
			&\dis_X(q_X,q_L)\ge {\dis_M(q_X,q_L)}\ge{\dis_M(q_L,p_X)-\dis_M(p_X,q_X)}\ge\\& \dis_M(L,p_X)+O(\dis_M(q_X,q_L))=\dis_M(\ell,p_X)+O(\dis_M(q_X,q_L)).
		\end{align*}
		Since $\dis_M(q_X,q_L)=O(r_0),$ rearranging gives $$\dis_X(q_X,q_L)\ge(1+O(r_0))\dis_M(\ell,p_X)=(1+O(r_0))\dis_{T_\ell M}(p_X,0).$$ On the other hand,
		\begin{align*}
			\dis_M(q,q_X)\le \dis_M(q,p_X)=(1+O(r_0))\dis_{\T_\ell M}(v,v_X).
		\end{align*}
		This yields
		\begin{align*}
			\frac{\dis_M(q,q_X)}{\dis_X(q_X,q_L)}\le(1+O(r_0))\frac{\dis_{\T_\ell M}(v,v_X)}{\dis_{\T_\ell M}(p_X,0)}.
		\end{align*}
		This yields the desired result.
	\end{proof}
	\begin{lem}
		Let $X,Y$ be compact closed (not necessarily orientable) submanifolds intersecting at a compact boundaryless submanifold $L,$ with intersection angle $\Ta(X,Y)>0.$ Then for any $0<\ta_0<\frac{1}{2}\Ta(X,Y),$ there exists $r_0$, which depends continuously on the geometry of $X,Y,M$ and $\frac{1}{2}\Ta(X,Y)-\ta_0,$ so that  we have
		\begin{align*}
			W_{(r,\ta)}^M(X,L)\cap W_{(r,\ta)}^M(Y,L)=L,
		\end{align*}
		for any $0\le r\le r_0,0<\ta\le\ta_0.$
	\end{lem}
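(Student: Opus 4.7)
The plan is to reduce, via Lemma \ref{intang}, to a purely linear statement at each tangent space over $L$, and then derive a contradiction from the triangle inequality for angles on the unit sphere transverse to $\T_pL$, using the gap assumption $\ta_0<\tfrac12\Ta(X,Y)$.

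First I would fix an $\e>0$ small enough that $\ta_0+\e<\tfrac12\Ta(X,Y)$, and then choose $r_0$ small enough that Lemma \ref{intang} provides, for every $0\le r\le r_0$ and $0<\ta\le\ta_0$,
\[
W_{(r,\ta)}^M(X,L)\s\exp^\perp W_{(r,\ta+\e)}^M(X,L),\qquad W_{(r,\ta)}^M(Y,L)\s\exp^\perp W_{(r,\ta+\e)}^M(Y,L).
\]
Via Fermi coordinates of $L$ inside $M$ (as used in the proof of Lemma \ref{intang}), any point $q$ in a tubular neighborhood of $L$ has a unique foot point $p\in L$ and a normal vector $v\in(\T_pL)^\perp$ with $q=\exp^\perp_{L\s M}(v)$. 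If $q$ lies in both exponentiated angle neighborhoods, then the \emph{same} $v$ must lie in both $W^{\T_pM}_{(r,\ta+\e)}(\T_pX,\T_pL)$ and $W^{\T_pM}_{(r,\ta+\e)}(\T_pY,\T_pL)$, so the claim reduces to a pointwise linear statement in $\T_pM$.

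In $\T_pM$, set $P_X':=\T_pX\cap(\T_pL)^\perp$ and $P_Y':=\T_pY\cap(\T_pL)^\perp$; by Definitions \ref{ints} and \ref{intangm} one has $\Ta(P_X',P_Y')\ge\Ta(X,Y)$. Since $v\in(\T_pL)^\perp$, the defining ratio of the angle neighborhood translates, when $v\ne0$, into the existence of unit vectors $v_X\in P_X'$ and $v_Y\in P_Y'$ each making angle at most $\ta_0+\e$ with $v$. The triangle inequality on the unit sphere of $(\T_pL)^\perp$ then yields
\[
\angle(v_X,v_Y)\le 2(\ta_0+\e)<\Ta(X,Y)\le\Ta(P_X',P_Y'),
\]
contradicting the very definition of the intersection angle. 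Hence $v=0$ and $q=p\in L$, as desired.

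The continuous dependence of $r_0$ on the geometry of $X,Y,M$ and on $\tfrac12\Ta(X,Y)-\ta_0$ is inherited directly from the analogous continuity of the $O(r_0)$ correction in Lemma \ref{intang}; one may take, for concreteness, $\e=\tfrac12\bigl(\tfrac12\Ta(X,Y)-\ta_0\bigr)$. The main obstacle I anticipate is bookkeeping in the linearization step: one must set things up so that the two exponentiated neighborhoods share a common Fermi foot point $p\in L$, ensuring the two angle conditions live in a single tangent space $\T_pM$; this is the reason to use the normal bundle of $L$ rather than of $X$ or $Y$ separately. Once that is correctly arranged, the conclusion is immediate from the one-line triangle inequality above.
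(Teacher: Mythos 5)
Your proposal is correct and is, in substance, the argument the paper leaves implicit when it writes that the lemma ``follows directly from Lemma~\ref{intang}'': you reduce to the exponentiated cone neighborhoods of Lemma~\ref{intang}, pass to a single tangent space $\T_pM$ via the normal bundle of $L$, and conclude by the triangle inequality for angles together with the gap $2\ta_0<\Ta(X,Y)$. The one bookkeeping point you already flag---choosing to linearize using $\exp^\perp_{L\s M}$ rather than $\exp^\perp_{X\s M}$ so the two cone conditions live over a common foot point $p\in L$---is exactly the detail the paper glosses over, and the proof of Lemma~\ref{intang} itself (which works with $X$ graphical over $\exp^\perp_{L\s M}\T X$) supports your reading, so this is a presentational choice rather than a deviation.
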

	\begin{proof}
		This follows directly from Lemma \ref{intang}.
	\end{proof}
	\subsection{Angle neighborhoods lemma}
	\begin{lem}\label{ang}
		The basic set up is as follows.
		\begin{itemize}
			\item 
			$L^l\s X^d,Y^d\s N^n\s M^{m+n}$ is a sequence of inclusion of compact, not necessarily orientable, and not necessarily boundaryless smooth Riemannian manifolds. 
			\item $X$ and $Y$ intersects transversely inside $N$ along $L$ with $\pd L=\es,L\cap\pd X=\es,L\cap\pd Y\not=\es,$
			\item the intersection angle of $X$ and $Y$ satisfies $\frac{\pi}{2}>\ta_0>0.$
		\end{itemize}
		Then there exists $\e,r_0>0,$ so that whenever $X',Y'$ are smooth perturbations of the embeddings $X$ and $Y$ inside $M$ with $C^2$, which are $\e$ close in $C^2$ norm to the original embedding, we can find smooth $l$-dimensional submanifolds $L_X'\s X'$ and $L_Y'\s Y'$ both of which are perturbations of the embedding of $L$, so that they are $O(\e)$ close in $C^2$ norm to $L.$ Moreover, we have
		\begin{align*}
			W_{(r,\ta)}^M(X',L_X')\cap W_{(r,\ta)}^M(Y',L_Y')=L_X'\cap L_Y',
		\end{align*}
		for all $r\le r_1$ and $0<\ta<\ta_0/2.$
	\end{lem}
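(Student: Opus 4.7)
The plan is to build $L_X'$ and $L_Y'$ by transporting $L$ via the diffeomorphisms naturally associated with the $C^2$-perturbations $X\to X'$ and $Y\to Y',$ then to reduce the angle-neighborhood disjointness to the preceding lemma applied to the unperturbed triple $(X,Y,L),$ absorbing an $O(\e)$ error into the slack in $\ta<\ta_0/2.$

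For $\e$ small, $X'$ is the normal graph $\exp^{\perp}_{X\s M}(\xi_X)$ of a $C^2$ section $\xi_X$ of the normal bundle of $X$ in $M$ with $\no{\xi_X}_{C^2}=O(\e).$ The graph map $\Phi_X:X\to X',\;p\mapsto \exp^{\perp}_{X\s M}(\xi_X(p)),$ is then $O(\e)$-close to the inclusion $X\hookrightarrow M$ in $C^2.$ Define $L_X':=\Phi_X(L)$ and analogously $L_Y':=\Phi_Y(L).$ Since $\pd L=\es,$ both $L_X'\s X'$ and $L_Y'\s Y'$ are closed $l$-dimensional submanifolds, each $O(\e)$-close to $L$ in $C^2,$ and the intersection angles $\Ta(X',Y')$ along $L_X',L_Y'$ agree with $\Ta(X,Y)$ up to $O(\e),$ hence stay above $\ta_0$ for $\e$ small.

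For the angle-neighborhood disjointness, the preceding lemma applied to $(X,Y,L,M)$ at angle $\frac{3}{4}\ta_0$ yields $r_0>0$ with $W_{(r,\ta')}^M(X,L)\cap W_{(r,\ta')}^M(Y,L)=L$ for all $0\le r\le r_0,\,0<\ta'\le\frac{3}{4}\ta_0.$ The crux is the quantitative inclusion
\begin{align*}
W_{(r,\ta)}^M(X',L_X')\s W_{(r(1+O(\e)),\,\ta+O(\e))}^M(X,L),
\end{align*}
together with its $Y'$-analog: the nearest-point projections, the distance functions to $L$ and $L_X',$ and the tangent planes along $X,X'$ all differ by $O(\e)$ on the relevant compact set, so a direct comparison of the two defining inequalities of $W$ yields the claim. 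Once this is in hand, choosing $\e$ with $\ta+O(\e)<\frac{3}{4}\ta_0$ forces any $q\in W_{(r,\ta)}^M(X',L_X')\cap W_{(r,\ta)}^M(Y',L_Y')$ to lie in $L;$ a short additional cone computation along $L$ then identifies $q$ with a point of $L_X'\cap L_Y',$ since any point of $L$ that is not on $L_X'$ would sit at strictly positive angled distance from $X'$ and thus fall outside its angle-neighborhood, and symmetrically for $Y'.$

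The main obstacle is the quantitative inclusion itself: the defining ratio $\dis_M(p,X')/\dis_{X'}(\pi_{X'}^M(p),L_X')$ is singular along $L_X'$ where the denominator vanishes, so a naive $C^2$-bound on $\xi_X$ does not directly control the ratio uniformly near $L_X'.$ I would handle this by switching to the exponentiated angle-neighborhood formulation from the remark following Lemma \ref{intang}, where the defining data reduces to the family of Euclidean cones on $\T_p L\s \T_p X\s \T_p M$ along $L,$ whose dependence on the $C^2$ embedding is manifestly Lipschitz in $\no{\xi_X}_{C^2}.$ Translating back via Lemma \ref{intang} introduces only an additional $O(r_0)$ angular slack, absorbed by shrinking $r_0$ further if necessary. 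The boundary hypothesis $L\cap\pd Y\ne\es$ intervenes only in the Fermi-coordinate setup of Lemma \ref{intang} near $\pd Y,$ which is resolved by a slight inward retraction of the relevant compact piece of $L$ inside $Y^\circ$ or by doubling $Y$ across $\pd Y,$ either of which preserves the uniform $C^2$-bounds under $\Phi_Y$ without altering the conclusion.
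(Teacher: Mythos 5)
The proposal breaks down at the very first step: your choice of vertex sets $L_X'=\Phi_X(L)$ and $L_Y'=\Phi_Y(L)$ is not the right one, and with it the conclusion of the lemma is simply false. The two pushforwards are independent of one another, so there is no structural relationship between $L_X'$ and $L_Y'$; generically $L_X'\cap L_Y'=\es$, and yet the angle neighborhoods do intersect. A concrete counterexample: take $M=\R^3$, $N=\{z=0\}$, $X$ the $x$-axis, $Y$ the $y$-axis, $L=\{0\}$, and perturb by pure translations $X'=X+(0,\e,0)$, $Y'=Y+(\e,0,0)$. Your recipe gives $L_X'=(0,\e,0)$ and $L_Y'=(\e,0,0)$, which are disjoint. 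But the true transverse intersection point $q=(\e,\e,0)=X'\cap Y'$ lies in $W_{(r,\ta)}^M(X',L_X')\cap W_{(r,\ta)}^M(Y',L_Y')$ for every $\ta>0$ (the numerator $\dis_M(q,X')$ vanishes while the denominator $\dis_{X'}(\pi_{X'}^M q,L_X')=\e>0$, and symmetrically). So the asserted equality $W\cap W=L_X'\cap L_Y'$ fails. This is not a borderline issue that shrinking $\e$ or $r$ can repair.

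The paper instead \emph{couples} the two vertex sets through $N$: it projects $X',Y'$ to $N$ via $\pi_N^M$, takes the transverse intersection $L'=X'_N\cap Y'_N$ inside $N$ (which is where the transversality hypothesis is actually used, via Golubitsky--Guillemin), and then defines $L_X'=(\pi_N^M|_{X'})^{-1}(L')$, $L_Y'=(\pi_N^M|_{Y'})^{-1}(L')$. The point is that $L_X'$ and $L_Y'$ project to the \emph{same} set $L'\subset N$, i.e., they are vertically stacked over $N$. That alignment is what makes the rest work: one proves that projection takes angle neighborhoods of $(X',L_X')$ roughly into angle neighborhoods of $(X'_N,L')$, uses in-$N$ transversality at the common $L'$ to conclude the projected neighborhoods meet only in $L'$, and then shows that over $L'$ the original neighborhoods can only meet in $L_X'\cap L_Y'$. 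In your example, the paper's construction instead gives $L_X'=L_Y'=(\e,\e,0)$, which is exactly the transverse intersection point, and the lemma holds.

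Your backup plan — the quantitative inclusion $W_{(r,\ta)}^M(X',L_X')\s W_{(r(1+O(\e)),\ta+O(\e))}^M(X,L)$ — is also false for the same underlying reason. In the example above, $q=(\e,\e,0)$ is in your left-hand set for every $\ta$, but for the unperturbed pair the ratio $\dis_M(q,X)/\dis_X(\pi_X^M q,L)=\e/\e=1$, so $q\notin W_{(\cdot,\ta+O(\e))}^M(X,L)$ whenever $\ta+O(\e)<\pi/4$. Translating the vertex by $O(\e)$ along $X$ is \emph{not} an $O(\e)$ perturbation of the angle, precisely because the defining ratio is comparing two quantities that are both $O(\e)$ in the relevant region; passing to the exponentiated cone picture does not help since the Lipschitz dependence you cite is for the cone directions, not the location of the apex, and the apex is what moved. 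The displacement of the vertex set has to be controlled relative to the \emph{other} surface's vertex set, which is exactly what the paper's coupled construction provides and what independent pushforwards of $L$ do not.
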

	\subsubsection{Setting up $L_X',L_Y'$}By our assumption, for $\e$ small enough, $\pi_N^M|_{X'}$ is an embedding because constant rank is an open condition, i.e., by using determinants of minors. Denote the projected image $\pi_N^M(X')$ by $$X'_N.$$ Similar statements hold for $Y'$ and denote $\pi_N^M(Y')$ by $Y'_N.$ 
	
	By dimension counting of $X,Y,N$, and Theorem 3.11 in \cite{GG} , for $\e$ small, we deduce that, $X'_N\cap Y'_N$ intersects transversely at an $l$-dimensional submanifold $L'$ Let $$L_X'=(\pi_N^M|_{X'})\m (L'),L_Y'=(\pi_N^M|_{Y'})\m (L').$$ 
	
	Our strategy is to prove first
	\begin{align}\label{goal}
		\pi_N^M(W_{(\frac{1}{2}r,\ta)}^M(X',L'_X))\s W_{(r,\arctan[(1+O(\sqrt{r_1}))\tan\ta])}^N(X'_N,L')
	\end{align}
	for some $\e<r_1<0, r\le r_1$, 	and do so for $Y'.$ In other words, the projection of angled neighborhoods of $X'$ around $L'$ is roughly contained in the angled neighborhood of the projection. The angle neighborhoods of the projection of $X'$ and $Y'$ only intersect at $L'.$ Thus, to prove the lemma, it suffices to show that $(\pi_N^M)\m (L')$ only intersects the angled neighborhood of $(X',L'_X)$ at $L_X'$
	\subsubsection{Setting up coordinate patches}
	For any point $a\in L'_X,$ first set up a normal coordinate system with coordinate label $\ell$ near $a$ on $L'_X$ in the intrinsic metric on $L'_X.$  Then adopt a Fermi coordinate with respect to the normals to $L'_X$ on $X_N'$ obtained by parallel transporting a fixed frame of the normal bundle at $a.$ Use coordinate label $x.$ Then repeat for the inclusion $X_N'\s N$ with coordinate label $\nu$. Repeat again for the inclusion $N\s M$ with coordinate label $\mu.$ Thus, any point in this layered coordinate system will be denoted by $(\ell,x,\nu,\mu).$
	
	It is straightforward to verify that  $\na_{v_j}v_l=0$ at the point $(0,0,0,0)$ for any vectors $v_j,v_l$ being coordinate vector fields. Taylor's theorem with remainder gives 
	\begin{align}\label{covder}
		\na_{v_j}v_l=O(r),
	\end{align} with $r=|\ell|^2+|x|^2+|\nu|^2+|\mu^2|$ and big $O$ only depending on the geometry of $N,L',X'.$
	
	\textbf{Warning!} Here $|\cdot|^2$ denoting sum of coordinate components squared. It does not denote the length measured by the metric. Though by Taylor's theorem with remainder, we always have 
	\begin{align}\label{eucnorm}\frac{|v+w|^2-|v-w|^2}{4}=g_{(\ell,x,\nu,\mu)}(v,w)+O(r)|v||w|,\\\label{lth}\dis_M(p,q),\dis_N(p,q),\dis_{X_N'}(p,q)=(1+O(\max\{|p|,|q|\}))|p-q|.
	\end{align} 	
	Furthermore, by Chapter 2 in \cite{AG}, $|\mu|$ equals the distance to $N,$ $|\nu|$ equals the distance to $X$ in $N,$ etc. At every step of taking the normal/Fermi coordinates we can ensure that new newly added label can reach the length of at least some $r_1>0$ in every direction by compactness and considering focus radius/injectivity radius. In other words all of 
	\begin{align}\label{r1}
		\{|\ell|\le r_1\}\times\{|x|\le r_1\}\times\{|\nu|\le r_1\}\times\{|\mu|\le r_1\}
	\end{align} is well-defined in this neighborhood. From now on we
	
	Since $L$ is compact, we can choose a finite subcover of these recursively defined Fermi coordinates. Call the collection $\{U_j\}.$
	
	By construction, in each coordinate $U_j,$ we have $\pi_N^M(\ell,x,\nu,\mu)=(\ell,x,\nu),$  and
	\begin{align}\label{angnei}
		W_{(r,\ta)}^N(X'_N,L')=\{|\ell|^2+|x|^2+|\nu|^2\le r^2,|\nu|/|x|\le \tan\ta\}.
	\end{align}
	Direct calculation using Taylor's theorem with remainder gives that
	\begin{align}\label{metdiff}
		g_{(\ell,x,\nu,\mu)}(v,w)= g_{(\ell,x,\nu,0)}(v,w)+O(|\mu|)|v||w|,
	\end{align} with $v,w$ extended constantly coordinate-wise and big $O$ depending on the geometry of $N,X',L'.$ 
	
	Finally, by our assumptions, $X'$ and $L'_X$ can be parameterized by
	\begin{align*}
		L'_X=\{(\ell,0,0,\mu_X(\ell,0))\}	,	X'=\{(\ell,x,0,\mu_X(\ell,x))\}.
	\end{align*}
	for smooth functions $\mu_X$ with $C^2$ norm of $O(\e).$ This implies that for any tangent vector $v$ to $X',$ we have
	\begin{align}\label{xmet}
		v_{\nu}=0,|v_{\mu}|=O(\e)|v_{(\ell,x)}|,
		g_{{X'}}(v,w)=g_{X'_N}(v,w)+O(\e)|v||w|.
	\end{align}
	Subscripts $\mu,(\ell,x)$, etc., means taking only the components in these coordinate directions.
	\subsubsection{Length of curves on $X'$ under projection}
	Let $\ga$ be an arbitrary smooth curve on $X'.$ Parameterize $\ga$ with $|\dot{\ga}|=1$. By (\ref{xmet}) and (\ref{eucnorm}), we have
	\begin{align*}
		\hm^1(\ga)=&\int\sqrt{g_{\ga(t)}(\dot{\ga},\dot{\ga})}dt\\
		=&\int\sqrt{g_{\pi_N^M\circ\ga(t)}(\dot{\ga},\dot{\ga})+O(\e)|\dot{\ga}|^2}dt\\
		=&\int\sqrt{g_{\pi_N^M\circ\ga(t)}(\dot{\ga}_{(\ell,x)},\dot{\ga}_{(\ell,x)})+g_{\pi_N^M\circ\ga(t)}(\dot{\ga}_{\mu},\dot{\ga}_{\mu})+O(r)+O(\e)}dt\\
		=&\int\sqrt{(1+O(\e))g_{\pi_N^M\circ\ga(t)}(\dot{\ga}_{(\ell,x)},\dot{\ga}_{(\ell,x)})+O(r_1)}dt.
	\end{align*}
	Here $r$ is defined just after (\ref{covder}) and $r_1$ is defined in (\ref{r1}). On the other hand, by (\ref{xmet}) $|\dot{\ga}_{(l,x)}|=(1+O(\e))|\dot{\ga}|=1+O(\e).$ This gives $g_{\pi_N^M\circ\ga(t)}(\dot{\ga}_{(\ell,x)},\dot{\ga}_{(\ell,x)})=1+O(r_1).$ Thus, with $r_1$ small and $\e/r_1<\frac{1}{10}$ , we can deduce that
	\begin{align}\label{xlength}
		\hm^1(\ga)=(1+O(r_1))\int\sqrt{g_{\pi_N^M\circ\ga(t)}(\dot{\ga}_{(\ell,x)},\dot{\ga}_{(\ell,x)})}=(1+O(r_1))\hm^1(\pi_N^M\circ\ga).
	\end{align}
	Consider a point $p\in X'$ so that $\dis_{X'}(p,L_X')=\rh>0.$ Let $\ga_1$ be the unique geodesic from $\pi_N^M(p)$ to $L'$ in the intrinsic metric on $X'_N$. Define 
	\begin{align*}
		\ga=((\ga_1)_{(\ell,x)},0,\mu((\ga_1)_{(\ell,x)})).
	\end{align*}
	Then $\ga$ is a smooth curve on $X'$ from $p$ to some point on $L_X'.$  Equation (\ref{xlength}) gives
	\begin{align}\label{projdis}
		\dis_{X'}(p,L_X')\le \hm^1(\ga)=(1+O(r_1))\hm^1(\ga_1)=(1+O(r_1))\dis_{X'_N}(\pi_N^M(p),L').
	\end{align}The big $O$ depends only on the geometry of $N,X',L'.$
	\subsubsection{Distance to $X'$ under projection}
	On the other hand, let $q$ be a point in $U_j$ so that $\dis_M(p,q)=\dis_M(q,X').$ Let $\vp$ be the unique geodesic starting from $p$ to $q.$ Parameterize $\vp$ with $|\dot{\vp}|=1.$
	
	Note that $\dot{\vp}(0)\perp X'$ by first variation arguments. Let $v$ be a tangent vector to $X'$ with $|v|=1.$ Then by (\ref{eucnorm}) (\ref{xmet}) and $v_\nu=0$, we have
	\begin{align*}
		&g(v_{(\ell,x)},\dot{\vp}_{(\ell,x)}(0))=g(v,\dot{\vp}(0))-	g(v_{\mu},\dot{\vp}_{\mu}(0))-g(v_\mu,\dot{\vp}_{(\ell,x)}(0))-g(v_{(\ell,x)},\dot{\vp}_{\mu}(0))\\=&0+O(|\nu_\mu||\dot{\vp}_\mu(0)|)+O(r_1)|\nu_\mu||\dot{\vp}_{(\ell,x)}(0)|+O(r_1)|\nu_{(\ell,x)}||\dot{\vp}_\mu(0)|\\
		=&|\nu_{(\ell,x)}|(O(\e)|\dot{\vp}_\mu(0)|+O(\e r_1)|\dot{\vp}_{(\ell,x)}(0)|+O(r_1)|\dot{\vp}_\mu|(0)).
	\end{align*}
	Dividing both sides by $|v_{(l,x)}|$ and taking the maximum of the left side, we deduce that
	\begin{align}\label{vpnorm}
		|\dot{\vp}_{(\ell,x)}(0)|=O(r_1)|\dot{\vp}_\mu(0)|.
	\end{align}
	This in turn gives
	\begin{align*}
		|\dot{\vp}_{(\nu,\mu)}(0)|=1+O(r_1),|\dot{\vp}_{(\ell,x)}|=O(r_1).
	\end{align*}
	Note that this does not put any restriction on $|\dot{\vp}_\nu(0)|.$ It may be that $|\dot{\vp}_\nu(0)|\approx 1$ and the other components are all small.
	
	By (\ref{covder}), and $\e<r_1$,we deduce that
	\begin{align*}
		0=\na_{\dot{\vp}(t)}\dot{\vp}(t)=\ddot{\vp}(t)+O(|\vp|^2r_1),
	\end{align*}
	Since $\vp$ stay in $U_j$ by our assumption, we have
	\begin{align}\label{vpest}
		&\ddot{\vp}=O(r_1^3),\dot{\vp}(t)=\dot{\vp}(0)+O(r_1^3)t,\\\label{vpest1}&\vp(t)=p+t\dot{\vp}(0)+O(r_1^3)t^2,\\
		&\label{vpest2}|\dot{\vp}_{(\ell,x)}(t)|=O(r_1),|\dot{\vp}_{(\nu,\mu)}(t)|=1+O(r_1).
	\end{align}
	This implies that the parameter of the curve lies in  \begin{align}\label{timeest}
		[0,(1+O(r_1))\dis_M(p,q)].
	\end{align} Now we have
	\begin{align*}
		\hm^1(\vp)=&\int\sqrt{g_{\vp(t)}(\dot{\vp}(t),\dot{\vp}(t))}dt\\
		=&\int\sqrt{g_{\pi_N^M\circ\vp(t)}(\dot{\vp}(t),\dot{\vp}(t))+O(|\vp(t)|)}dt\\
		=&\int\sqrt{g_{\pi_N^M\circ\vp(t)}(\dot{\vp}(t)_{(\ell,x,\nu)},\dot{\vp}(t)_{(\ell,x,\nu)})+|\dot{\vp}_\mu(t)|^2+O(r_1)}dt
	\end{align*}
	If $|\dot{\vp}_\mu(t)|^2$ is larger than the $O(r_1)$ term in absolute value, then the above gives
	\begin{align*}
		\hm^1(\vp)\ge \hm^1(\pi_N^M\circ\vp).
	\end{align*}
	Alternatively, if $|\dot{\vp}_\mu(t)|^2$ is smaller than the $O(r_1)$ term. Then by (\ref{vpest}) (\ref{vpest1}) (\ref{vpest2}), we deduce that $|\dot{\vp}_\nu(t)|=1+O(\sqrt{r_1}).$ Then use (\ref{vpnorm}), we deduce that $|\dot{\vp}_{(\ell,x,\nu)}(t)|=1+O(\sqrt{r_1}).$ In this case we deduce that
	\begin{align*}
		\hm^1(\vp)=(1+O(\sqrt{r_1}))\hm^1(\pi_N^M\circ\vp).
	\end{align*}
	Combining both cases, we always have
	\begin{align}\label{lengthpq}
		\dis_{N}(\pi_N^M(q),X_N')\le\hm^1(\pi_N^M\circ\vp)\le(1+O(\sqrt{r_1}))\hm^1(\vp)=(1+O(\sqrt{r_1}))\dis_M(q,X').
	\end{align}
	\subsubsection{Projection of angled neighborhoods: proof of (\ref{goal})}
	By (\ref{vpest1}) and (\ref{timeest}), we have 
	\begin{align*}
		\pi_N^M(q)=&\pi_N^M(p+t_{max}\dot{\vp}(0)+O(r_1^3)t_{max}^2)=\pi_N^M(p)+t_{max}\pi_N^M(\dot{\vp}(0))+O(r_1^3)t_{max}^2\\
		=&\pi_N^M(p)+\dis_M(q,X')\pi_N^M(\dot{\vp}(0))+O(r_1\dis_M{(q,X')}).
	\end{align*}
	By (\ref{vpest2}) (\ref{lengthpq}) and (\ref{projdis}), this implies that
	\begin{align*}
		\pi_{X'_N}^N\circ\pi_N^M(q)=&\pi_{X'_N}^N\circ\pi_N^M(p)+\dis_M(q,X')\pi_{X'_N}^N\circ\pi_N^M(\dot{\vp}(0))+O(r_1\dis_M{(q,X')})\\
		=&\pi_N^M(p)+O(r_1)\dis_M{(q,X')}\\
		=&\pi_N^M(p)+O(r_1)\frac{\dis_M(q,X')}{\dis_{X'}(p,L_X')}\dis_{X'}(p,L_X')\\
		=&\pi_N^M(p)+O(r_1)\frac{\dis_M(q,X')}{\dis_{X'}(p,L_X')}\dis_{X'_N}(\pi_N^M(p),L').
	\end{align*}
	Now we are ready to prove (\ref{goal}). Note that by our assumptions, if $q$ belongs to $W_{(\frac{1}{2}r,\ta)}(X_N',L')$ for $r\le r_0,\ta\le \frac{\pi}{4},$ then $\frac{\dis_M(q,X')}{\dis_{X'}(p,L_X')}\le 1$ by definition. Thus, we have
	\begin{align}\label{projdiff}
		\pi_{X'_N}^N\circ\pi_N^M(q)=\pi_N^M(p)+O(r_1)\dis_{X'_N}(\pi_N^M(p),L')
	\end{align}
	Let $q_L$ be the point on $L'$ that is of nearest distance to $\pi_{X'_N}^N\circ\pi_N^M(q)$ with respect to the intrinsic distance on $X'_N.$ Then by the triangle inequality, (\ref{projdiff}) (\ref{lth}) and (\ref{projdis}), we have
	\begin{align}
		&\dis_{X_N'}(L',\pi_{X'_N}^N\circ\pi_N^M(q))=	\dis_{X_N'}(q_L,\pi_{X'_N}^N\circ\pi_N^M(q))\\\ge&\dis_{X_N'}(q_L,\pi_N^M(p)) -\dis_{X'_N}(\pi_N^M(p),\pi_{X'_N}^N\circ\pi_N^M(q))\\\ge &\dis_{X'_N}(\pi_N^M(p),L')-O(r_1)\dis_{X'_N}(\pi_N^M(p),L')\\
		=&(1+O(r_1))\dis_{X'_N}(\pi_N^M(p),L')\\
		\ge&(1+O(r_1))\dis_{X'}(p,L_X').\label{denom}
	\end{align}
	By combining (\ref{lengthpq}) and (\ref{denom}), we can deduce that
	\begin{align*}
		\frac{\dis_{N}(\pi_N^M(q),X_N')}{\dis_{X_N'}(L',\pi_{X'_N}^N\circ\pi_N^M(q))}\le \frac{(1+O(\sqrt{r_1}))\dis_M(q,X')}{(1+O(r_1))\dis_{X'}(\pi_{X'}^M(q),L_X')}.
	\end{align*}
	Considering (\ref{lth}), $\dis_{N}(0,\pi_N^M(q))\le (1+O(r_1))\dis_M(0,q)\le 2\dis_M(0,q)$, we have proven (\ref{goal}), with big $O$ depending continuously on the geometry of $X',N,M.$
	\subsubsection{Conclusion}
	If we repeat the argument above with $X'$ replaced by $Y'$, and combine the results together, we deduce that
	\begin{align}
		&\pi_N^M(W_{(\frac{1}{2}r,\ta)}^M(X',L'_X))\s W_{(r,\arctan[(1+O(\sqrt{r_1}))\tan\ta])}^N(X'_N,L'),\label{w1}\\
		&\pi_N^M(W_{(\frac{1}{2}r,\ta)}^M(Y',L'_Y))\s W_{(r,\arctan[(1+O(\sqrt{r_1}))\tan\ta])}^N(Y'_N,L'),\label{w2}
	\end{align}
	with big $O$ depending continuously on the geometry of $X',Y',N,M.$
	
	With $\e$ small, $r<r_1$ small and $\tau<\frac{\ta_0}{2}$, we always have	
	\begin{align}
		W_{(r,\tau)}^N(X'_N,L')\cap W_{(r,\tau)}^N(Y'_N,L')=L'.
		\label{wint}
	\end{align} This follows, for example, by considering (\ref{angnei}) and (\ref{eucnorm}).
	
	Thus, by (\ref{w1}) (\ref{w2}) and (\ref{wint}), we deduce that
	\begin{align}\label{finint}
		W_{(\frac{1}{2}r,\ta)}^M(X',L'_X)\cap W_{(\frac{1}{2}r,\ta)}^M(Y',L'_Y)\s(\pi_N^M)\m (L')
	\end{align}
	By (\ref{vpest1}) (\ref{vpest2}) and (\ref{timeest}), we know that any geodesic $\vp$ starting normal from $X'\setminus L_X'$  will not meet $(\pi_N^M)\m (L')$ in the coordinate patches as long as the end point $q$ satisfies $\dis_M(\pi_{X'}^M(q),q)\le \dis_{X'}(\pi_{X'}^M(q),L'_X)$, which is satisfied in $W_{(\frac{1}{2}r,\ta)}^M(X',L'_X)$, Thus, we deduce that $$W_{(\frac{1}{2}r,\ta)}^M(X',L'_X)\cap(\pi_N^M)\m (L')=L_X'.$$ Similar arguments for $Y'$ shows that
	$$W_{(\frac{1}{2}r,\ta)}^M(Y',L'_Y)\cap(\pi_N^M)\m (L')=L_Y'.$$ By (\ref{finint}), we deduce that
	\begin{align*}
		W_{(\frac{1}{2}r,\ta)}^M(X',L'_X)\cap W_{(\frac{1}{2}r,\ta)}^M(Y',L'_Y)\s L_X'\cap L_Y'.
	\end{align*}
	We are done.
	\subsection{Integral currents and calibrations}
	Since we mostly work on the geometric side, the reader can just assume integral currents are chains over immersed oriented submanifolds with singularities. The action of an integral current on a differential form is integration on the chain. If the chain is an oriented smooth submanifold $\Si$ with mild singularities (possibly with boundary), we will usually abuse the notation and write $\Si$ instead of the usual literature convention $\cu{\Si}$.
	
	For calibrations, the reader needs to know the definition of comass and calibrations (Section II.3 and II.4 in \cite{HLg}). The primary reference is \cite{HLg}. The most important thing to keep in mind is the fundamental theorem of calibrations (Theorem 4.2 in \cite{HLg}), i.e., calibrated currents are area-minimizing among homologous competitors.  We will use it many times without explicitly citing it.
	
	The following lemma is folklore, but we could not find a reference for it. We provide quick proof here.
	\begin{lem}\label{hodge}
		Let $\ast$ denote the Hodge star on the Euclidean space $\R^{m+n}$. Then for any constant  $m$-form $\phi$ on , we have
		\begin{align*}
			\cms\phi=\cms\ast\phi.
		\end{align*}
	\end{lem}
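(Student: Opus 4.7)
The plan is to establish the pointwise identity $\phi(\eta) = (\ast\phi)(\ast\eta)$ for every simple unit $m$-vector $\eta$ in $\R^{m+n}$, and then invoke the fact that $\eta \mapsto \ast\eta$ is a bijection between simple unit $m$-vectors and simple unit $n$-vectors. Taking suprema on both sides then yields $\cms \phi = \cms \ast\phi$ immediately. Since $\phi$ is a constant form on $\R^{m+n}$, the entire argument takes place on the single vector space $\R^{m+n}$, so there is no need to carry a base point.

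For the pointwise identity I would work in an adapted orthonormal frame. Given a simple unit $m$-vector $\eta$, pick a positively oriented orthonormal basis $e_1,\cd,e_{m+n}$ of $\R^{m+n}$ with $\eta = e_1\w\cd\w e_m$. Expand $\phi = \sum_{|I|=m} c_I\, e_I\du$ over increasing multi-indices $I\s\{1,\cd,m+n\}$. Then $\phi(\eta) = c_{(1,\cd,m)}$ by direct evaluation, while the definition of Hodge star gives $\ast e_I\du = \epsilon_I\, e_{I^c}\du$ for explicit signs $\epsilon_I$, and $\ast\eta = \epsilon_{(1,\cd,m)}\, e_{m+1}\w\cd\w e_{m+n}$. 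Evaluating $(\ast\phi)(\ast\eta)$, only the summand with $I=(1,\cd,m)$ survives and the two signs multiply to $\epsilon_{(1,\cd,m)}^2 = 1$, yielding $c_{(1,\cd,m)}$. An equivalent and more invariant formulation is to use the musical identification $\phi(\eta) = \la \phi^\sharp,\eta\ra$ together with the well-known fact that $\ast$ is an isometry on $\Lambda^\bullet \R^{m+n}$ with respect to the naturally induced inner product.

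For the bijection, the same adapted-frame construction shows that whenever $\eta$ is a simple unit $m$-vector, $\ast\eta$ is a simple unit $n$-vector, namely the one representing the orthogonal complement with the induced orientation. Since $\ast\ast = (-1)^{mn}$ on $\Lambda^m\R^{m+n}$, the map $\eta \mapsto \ast\eta$ is injective and surjective onto simple unit $n$-vectors, with any global sign $(-1)^{mn}$ harmlessly absorbed by orientation reversal, which preserves comass.

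With these two ingredients, as $\eta$ varies over all simple unit $m$-vectors on the left of $\phi(\eta) = (\ast\phi)(\ast\eta)$, the corresponding $\ast\eta$ sweeps out all simple unit $n$-vectors on the right, so the suprema coincide and the lemma follows. The only thing that requires any care is the sign bookkeeping in the Hodge star, but these signs are either uniform across the supremum or else cancel via orientation reversal, so no genuine obstacle arises — the statement is essentially automatic once one records that $\ast$ is an isometry preserving simplicity of multivectors.
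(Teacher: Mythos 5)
Your argument matches the paper's proof in its essential content: both establish a pointwise relation between $\phi(\eta)$ and $(\ast\phi)(\ast\eta)$ for simple unit $m$-vectors $\eta$, and then observe that $\eta\mapsto\ast\eta$ bijects simple unit $m$-vectors onto simple unit $n$-vectors, so the suprema defining the two comasses agree. One small remark: your bookkeeping yields $(\ast\phi)(\ast\eta)=\phi(\eta)$ with no sign, which is in fact the correct identity, whereas the paper's chain of manipulations produces a factor $(-1)^{mn}$; that spurious sign is harmless only because comass takes absolute values, but your cleaner computation is to be preferred.
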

	\begin{proof}
		Every such $\phi$ can be written as a finite sum
		\begin{align*}
			\phi=\sum_j a_jX_j\du,
		\end{align*}with $a_j$ nonzero  real numbers and $X_j$ oriented $m$-dimensional coordinate planes.
		
		Let $Y$ be any oriented $m$-plane, and $Y^\perp$ be its orthogonal complement, oriented so that 
		\begin{align*}
			Y\w Y^\perp= \R^{m+n}.
		\end{align*} 
		This implies that
		\begin{align*}
			(\R^{m+n})\du=Y\du\w (Y^\perp)\du.
		\end{align*}
		If use an orthonormal coordinate adapted to $Y,Y^\perp,$ then coordinate calculation yields
		\begin{align*}
			\ast Y\du=(Y^\perp)\du.
		\end{align*}
		Then we have
		\begin{align*}
			\ast\phi=\sum_j a_j(Y^\perp)\du,
		\end{align*}
		by definition of Hodge star. Thus, by the determinant definition of simple forms, we have
		\begin{align*}
			\phi(Y)=&\sum_j a_jX_j\du(Y)=\sum_j a_j\ri{X_j\du,Y\du},\\
			\ast\phi(Y)=&\sum_j a_j(X_j^\perp)\du(Y^\perp)=\sum_j a_j\ri{(X_j^\perp)\du,(Y^\perp)\du}=\sum_j a_j \ri{\ast X_j\du,\ast Y\du}\\
			=&\sum_j a_j \ri{\ast X_j\du\w \ast\ast Y\du, (\R^{m+n})\du}=\sum_j a_j(-1)^{mn}\ri{\ast X_j\du\w Y\du,(\R^{m+n})\du}\\
			=&\sum_j a_j\ri{Y\du\w\ast X_j\du,(\R^{m+n})\du}=\sum_j a_j\ri{Y\du,\ast\ast X_j\du}\\
			=&(-1)^{mn}\sum_j a_j\ri{X_j\du,Y\du}\\
			=&(-1)^{mn}\phi(Y).
		\end{align*}
		By definition of comass, the result is immediate.
	\end{proof}
	The following lemma will be useful.
	\begin{lem}\label{prodcal}
		Endow the standard $\R^m\times \R^n$ with the standard metric. Let $A_1,\cd,A_k$ be a finite collection of distinct $d_1$-dimensional linear subspaces in $\R^m$ in general position with $d_1\ge 2,k\ge 2.$  Let $B_1,\cd,B_k$ be a finite collection of not necessarily distinct $d_2$-dimensional linear subspaces of $\R^n$, with $d_2\ge 0,k\ge 2.$ If $$\Ta(A_i,A_j)=\frac{\pi}{2}$$ for $i\not=j$, i.e., pairwise orthogonal modulo intersection, and intersection being of codimension at least two, then the following form $\phi$
		\begin{align*}
			\phi=\sum_j c_jA_j\du\w B_j\du,
		\end{align*} with $c_j>0$ constants, has comass
		\begin{align*}
			\cms\phi=\max_j|c_j|,
		\end{align*}  and $A_j\times B_j$ is calibrated by $\phi$ if $\cms\phi\le 1,c_j=\pm 1.$
	\end{lem}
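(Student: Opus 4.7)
The plan is to show $\cms \phi = \max_j c_j$ with matching upper and lower bounds; the calibration claim then follows immediately since $\phi$ is closed (being a constant form).

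For the lower bound $\cms \phi \geq \max_j c_j$, I would evaluate $\phi$ on the unit simple $(d_1+d_2)$-vector $A_j \w B_j$ for each fixed $j$. The cross pairings $A_i\du(A_j)$ should vanish for $i \neq j$: the hypothesis of linearly orthogonal complements provides a decomposition $A_i = (A_i \cap A_j) \op A_i''$ and $A_j = (A_i \cap A_j) \op A_j''$ with $A_i'' \perp A_j''$ and $\dim A_i'' = \dim A_j'' \geq 2$. Choosing orthonormal bases of $A_i, A_j$ adapted to these splittings, the Gram matrix $\ri{e_k^{(i)}, e_\ell^{(j)}}$ is block-diagonal with an identity block on $A_i\cap A_j$ and a zero block of size at least $2$ on the orthogonal complements, so $A_i\du(A_j) = 0$. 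Therefore $\phi(A_j \w B_j) = c_j$, giving $\cms \phi \geq \max_j c_j$.

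For the upper bound $\cms \phi \leq \max_j c_j$, I would first reduce to sign patterns. Since $[-1,1]^k$ is the convex hull of $\{\pm 1\}^k$, the rescaled coefficient vector $(c_j/\max_\ell c_\ell)_j$ can be written as a convex combination of sign vectors $\epsilon \in \{\pm 1\}^k$, expressing $\phi$ as a convex combination of forms $\phi_\epsilon = (\max_\ell c_\ell)\sum_j \epsilon_j A_j\du \w B_j\du$; by convexity of comass it suffices to prove $\cms \phi_\epsilon \leq \max_\ell c_\ell$. Flipping orientations of the $A_j$'s reduces this further to the case $\epsilon \equiv +1$. With $C = \ba_j A_j$ of codimension $s \geq 2$ in each $A_j$, decompose $A_j = C \op A_j'$ where $A_j' \s C^\perp$ are pairwise orthogonal $s$-dimensional subspaces, and factor $\sum_j A_j\du \w B_j\du = C\du \w \Psi$ with $\Psi = \sum_j (A_j')\du \w B_j\du$, a $(s+d_2)$-form on $C^\perp \op \R^n$. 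The main obstacle is the comass bound $\cms \Psi \leq 1$: because the $A_j'$ are pairwise orthogonal, the unit simple duals $A_j' \w B_j$ are pairwise orthogonal elements of $\Lambda^{s+d_2}(C^\perp \op \R^n)$, and $\Psi$ becomes a Nance--Morgan-type calibration for the configuration $\{A_j' \times B_j\}_j$; the bound follows from Wirtinger's inequality when $s=2$ and from the analogous calibration for pairwise orthogonal $s$-planes when $s \geq 3$. The passage from $\cms \Psi \leq 1$ to $\cms(C\du \w \Psi) \leq 1$ is handled by observing that for any unit simple $(d_1+d_2)$-vector $\xi$, the pairing $(C\du \w \Psi)(\xi)$ only captures the component of $\xi$ containing the factor $C$, whose residual part has mass at most $1$.

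Once $\cms \phi = \max_j c_j$ is in hand, the calibration claim is immediate: with $c_j = \pm 1$ and $\cms \phi \leq 1$, the pointwise equality $\phi(\pm A_j \w B_j) = 1$ (with orientation chosen so the sign is $+$) is the calibration condition, and $\phi$ is closed as a constant form. The principal technical difficulty throughout is the Nance--Morgan-type comass bound for $\Psi$, which requires either a direct Wirtinger-style calculation or invocation of the existing calibration literature on pairwise orthogonal planes.
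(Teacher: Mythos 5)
Your lower bound argument is sound: the Gram matrix of two adapted orthonormal bases for $A_i,A_j$ is block diagonal with a zero block of size $\geq 2$, so $A_i\du(A_j)=0$ for $i\neq j$ and $\phi(A_j\w B_j)=c_j$, giving $\cms\phi\ge\max_j c_j$. The convexity reduction to the sign pattern $\epsilon\equiv+1$ is also fine. However, the key step of your upper bound is wrong: you assert that with $C=\bigcap_j A_j$ the pieces $A_j'=A_j\cap C^\perp$ are pairwise orthogonal. That is false for $k\ge 3$. Take $\R^6$ with $A_1=\mathrm{span}(e_1,e_2,e_3,e_4)$, $A_2=\mathrm{span}(e_1,e_2,e_5,e_6)$, $A_3=\mathrm{span}(e_3,e_4,e_5,e_6)$: each pair is orthogonal modulo a codimension-$2$ intersection, and the complements $A_1^\perp=\mathrm{span}(e_5,e_6)$, $A_2^\perp=\mathrm{span}(e_3,e_4)$, $A_3^\perp=\mathrm{span}(e_1,e_2)$ are linearly independent (so the general position hypothesis holds). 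Yet $\bigcap_j A_j=\{0\}$, hence $A_j'=A_j$, and these are nowhere near pairwise orthogonal (e.g.\ $A_1\cap A_2=\mathrm{span}(e_1,e_2)\neq\{0\}$). The pairwise intersections need not coincide with the global intersection, so the factorization $\phi=C\du\w\Psi$ with $\Psi$ a sum over pairwise orthogonal planes does not exist in general.

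The paper sidesteps this cleanly with Hodge duality. By Lemma~\ref{hodge}, $\cms\phi=\cms\ast\phi$, and $\ast(A_j\du\w B_j\du)=\pm(A_j^\perp)\du\w(B_j^\perp)\du$ since $(A_j\oplus B_j)^\perp=A_j^\perp\oplus B_j^\perp$. The hypotheses (orthogonality modulo intersection together with linearly independent complements) are precisely the statement that the $A_j^\perp$ are genuinely pairwise orthogonal and linearly independent, so $\ast\phi$ is a sum of simple forms supported on pairwise perpendicular planes in the first factor; Lemma~2.1 of Dadok--Harvey--Morgan, applied inductively, then gives $\cms\ast\phi=\max_j|c_j|$. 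Passing to the Hodge dual is exactly the device that converts ``orthogonal modulo a shared subspace'' into ``orthogonal outright,'' and that is the structural step missing from your proposal.
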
 
	\begin{rem}
		Here $d_2=0$ means $B_j\du=1$.
	\end{rem}
	\begin{proof}
		By Lemma \ref{hodge}, it suffices to show that $\cms \ast\phi=1.$ Dimension counting gives that the orthogonal complement of $A_j\oplus B_j$ is precisely $A_j^\perp\oplus B_j^\perp.$ Thus, we have
		\begin{align*}
			\ast\phi=&\sum_j c_j\ast(A_j\du\w B_j\du)\\
			=&\sum_jc_j ((A_j\oplus B_j)^\perp)\du\\
			=&\sum_j c_j(A_j^\perp)\du\w (B_j^\perp)\du.
		\end{align*}
		Note that the collection $A_j^\perp$ consists of pairwise perpendicular and linearly independent subspaces. Thus, an inductive application of Lemma 2.1 in \cite{DHM} does the job for us.
	\end{proof}
	\begin{rem}
		It may very well happen that $\phi$ calibrates more than just $A_j\times B_j$. For example, the case of $d_1=2$, $d_2=0$,
		$\phi$ can be Kahler forms. 
	\end{rem}
	\subsection{Gluing of calibrations}
	In this subsection, we will give some useful lemma about gluing calibrations. 
	\begin{lem}Let $g,h$ be smooth metrics on a manifold, and $\phi$ a smooth form. Then we have\label{prodcom}
		\begin{align*}
			\cms_{g^{a}h^b}\phi\le (\cms_g\phi)^a(\cms_h\phi)^b
		\end{align*}
	\end{lem}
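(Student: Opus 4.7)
The plan is a pointwise H\"older argument at the level of simple $m$-vectors, where $m=\deg\phi$. Since $g$ and $h$ are symmetric positive-definite at each point $p$, they can be simultaneously diagonalized: there is a basis $\{e_i\}$ of $\T_pM$ in which $g_p=\dia(\lam_i)$ and $h_p=\dia(\mu_i)$. The metric $g^ah^b$ is defined through this simultaneous diagonalization as $\dia(\lam_i^a\mu_i^b)$, and the key fact I would need is the norm inequality
\begin{align*}
|\xi|_g^a\,|\xi|_h^b\le |\xi|_{g^ah^b}
\end{align*}
for every simple $m$-vector $\xi$, with equality when $\xi$ is proportional to a coordinate multivector in the diagonalizing basis.

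To verify this norm inequality I would use the Cauchy–Binet expansion: in the diagonalizing basis, $|\xi|_{g^ah^b}^2$ is a sum of squared $m\times m$ minors of $\xi$ weighted by monomials $\prod_{i\in I}\lam_i^a\mu_i^b$ over $m$-subsets $I$. As a function of $(a,b)$, each such summand is log-affine; the log of a sum of log-affine functions is convex, and this convexity delivers the desired H\"older inequality between $|\xi|_g$, $|\xi|_h$, and $|\xi|_{g^ah^b}$.

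Once the norm bound is in place, the lemma follows from a two-line scalar H\"older argument applied at each point. For any simple $m$-vector $\xi$, the definition of comass gives $|\phi_p(\xi)|\le |\xi|_g\cms_g\phi$ and $|\phi_p(\xi)|\le |\xi|_h\cms_h\phi$. Taking the $a$-th and $b$-th powers and multiplying (in the natural interpolation regime $a+b=1$),
\begin{align*}
|\phi_p(\xi)|=|\phi_p(\xi)|^a|\phi_p(\xi)|^b\le|\xi|_g^a|\xi|_h^b(\cms_g\phi)^a(\cms_h\phi)^b\le|\xi|_{g^ah^b}(\cms_g\phi)^a(\cms_h\phi)^b.
\end{align*}
Taking the supremum over unit simple $m$-vectors in the $g^ah^b$ norm yields $\cms_{g^ah^b}\phi\le(\cms_g\phi)^a(\cms_h\phi)^b$.

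The main obstacle is the norm H\"older step: pinning down the precise meaning of $g^ah^b$ (via simultaneous diagonalization, as above) so that the Cauchy–Binet convexity argument goes through cleanly, and checking that the resulting object really behaves as a norm on simple $m$-vectors. Once that is granted, the comass inequality is a formal consequence of scalar H\"older applied to $|\phi(\xi)|$, and the estimate is pointwise so no further work is needed to globalize.
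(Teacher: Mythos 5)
Your proposal rests on the norm inequality $|\xi|_g^a\,|\xi|_h^b\le|\xi|_{g^ah^b}$ for simple $m$-vectors, to be derived from Cauchy--Binet and log-convexity of $(a,b)\mapsto\log|\xi|^2_{g^ah^b}$. Unfortunately the convexity argument gives the \emph{opposite} inequality. Writing $\xi=\sum_I\xi_I e_I$ in a basis simultaneously diagonalizing $g=\dia(\lam_i)$ and $h=\dia(\mu_i)$, Cauchy--Binet yields
$|\xi|^2_{g^ah^b}=\sum_I\xi_I^2\prod_{i\in I}\lam_i^a\mu_i^b.$
This is a sum of log-affine functions of $(a,b)$, so its logarithm is \emph{convex}; convexity at the interpolation point $(a,b)=a(1,0)+b(0,1)$ with $a+b=1$ gives
$\log|\xi|^2_{g^ah^b}\le a\log|\xi|^2_g+b\log|\xi|^2_h,$
i.e. $|\xi|_{g^ah^b}\le|\xi|_g^a\,|\xi|_h^b$. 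This is also just the classical H\"older inequality applied to the sum over $I$ with exponents $1/a$, $1/b$. A concrete check: with $g=\dia(1,1)$, $h=\dia(4,1)$, $a=b=\tfrac12$, $\xi=e_1+e_2$, one finds $|\xi|_{g^{1/2}h^{1/2}}=\sqrt{3}<10^{1/4}=|\xi|_g^{1/2}|\xi|_h^{1/2}$. The coordinate multivectors, where you observe equality, are in fact where $|\xi|_{g^ah^b}/\bigl(|\xi|_g^a|\xi|_h^b\bigr)$ attains its maximum $1$, not its minimum. Consequently the final step in your chain, bounding $|\xi|_g^a|\xi|_h^b$ by $|\xi|_{g^ah^b}$, fails, and the pointwise-in-$\xi$ H\"older scheme cannot deliver the stated comass bound under the simultaneous-diagonalization reading of $g^ah^b$.

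The paper's proof is shorter and runs differently: it uses the characterization $\cms_g\phi=1/\min\{\no{V}_g:\phi(V)=1,\ V\text{ simple}\}$ from \cite{YZa} and asserts the pointwise \emph{identity} $\no{V}_{g^ah^b}=(\no{V}_g)^a(\no{V}_h)^b$ on simple vectors, after which $\min\no{V}_{g^ah^b}\ge(\min\no{V}_g)^a(\min\no{V}_h)^b$ is immediate and taking reciprocals finishes. That identity does not hold under your matrix-power interpretation (it already fails at $\no{V}_{g^a}=(\no{V}_g)^a$ for non-unit $V$), so $g^ah^b$ in the paper is better read as shorthand defined by that very formula on simple vectors, i.e. the object whose ``norm'' is the geometric interpolation. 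In short: the paper sidesteps the issue by taking an equality essentially as the definition of the interpolated metric, whereas you introduced a genuinely different object via simultaneous diagonalization, for which the needed inequality goes the wrong way.
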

	\begin{proof}Let $V$ denote a simple vector. We have 
		\begin{align*}
			\no{V}_{g^ah^b}=\no{V}_{g^a}\no{V}_{h^b}=(\no{V}_g)^a(\no{V}_h)^b.
		\end{align*}
		By Remark 2.2 in \cite{YZa}, we have
		\begin{align*}
			\cms_{g^ah^b}(\phi)=&1/\min\{\no{V}_{g^ab^b}|\phi(V)=1\}\\
			=&
			1/\min\{(\no{V}_g)^a(\no{V}_g)^b|\phi(V)=1\}\\
			\le&\bigg(\frac{1}{\min\{\no{V}_g|\phi(V)=1\}}\bigg)^a
			\bigg(	\frac{1}{\min\{\no{V}_h^b|\phi(V)=1\}}\bigg)^b\\
			=&(\cms_g\phi)^a(\cms_h\phi)^b.
		\end{align*}
	\end{proof}
	Next, we are ready to state our main lemma. Suppose we have the following setup.
	\begin{lem}\label{glucalfor}
		\begin{itemize}
			\item  $M$ is a compact $m+n$-dimensional smooth (not necessarily orientable)  manifold (with possibly nonempty boundary), with $m\ge 1, n\ge 1$.
			\item  $\phi$ is a smooth calibration $m$-form in a smooth metric $g$ on $M.$
			\item There exist a smooth open subset $O\s M,$ and a sequence of smooth calibration $m$-forms $\{\phi_j\}$ in a sequence of smooth metric $\{g_j\}$, both defined on $O.$
			\item $g_j\to g$ and $\phi_j\to \phi$ smoothly in $O.$
			\item  $\phi_j-\phi$ is an exact form in $O.$
			\item  $B_\e^-(\pd O)$ is a one sided tubular neighborhood of $\pd O$ inside $O$, so that the exponential map in the normal bundle of $\pd O$ maps injectively onto $B_\e^-(\pd O)$.  
		\end{itemize}
		With the above assumptions, we have the following constructions.
		\begin{itemize}
			\item There exists a sequence of smooth metrics $\ov{g_j}$ and a sequence of smooth calibration $m$-forms $\ov{\phi_j}$ both defined globally on $O.$
			\item $\ov{g_j}\to g$ and $\ov{\phi_j}\to \phi$ smoothly.
			\item We have the following restriction properties
			\begin{align*}
				\ov{g_j}|_{B_\e^-(\pd O)\cp\cap O}=g_j,\ov{g_j}|_{ O\cp}=g,\\\ov{\phi_j}|_{B_\e^-(\pd O)\cp\cap O}=\phi_j,\ov{\phi_j}|_{O\cp}=\phi.
			\end{align*}
		\end{itemize}
		\begin{proof}
			The proof is a straightforward bridging of $(\phi,g)$ to $(\phi_j,g_j)$ in $B_\e^-(\pd O)$. Let $\rh$ denotes the signed distance function to $\pd O$. Calculation in Fermi coordinates verifies that $\rh$ is smooth. Let $\ta_{(a,b)}$ be a smooth transition function with positive real parameters $a,b$, so that $\tau_{(a,b)}$ that is monotonically decreasing on $\R,$, $$\tau|_{[-(b-a),\infty)}=0,\tau|_{(-(b+a),-1]}=1.$$ By Theorem 3.1.1 in \cite{GS}, there exists a smooth form $\psi_j$ defined on $O$, so that
			\begin{align*}
				d\psi_j=\phi-\psi_j,
			\end{align*} and $\psi_j\to 0$ smoothly. (First, by naturality of exterior derivative, we know that $\psi_j-\phi$ is exact on $\pd B_\e^-(\pd O)$. Apply Theorem 3.1.1 with no boundary, i.e., zero boundary condition, we get an antiderivative of $\psi_j-\phi$ on $\pd B_\e^-(\pd O)$ which converges smoothly to $0$ as $j\to\infty$ by estimate (1.3) on page 114 of \cite{GS}. Then use this antiderivative as the boundary condition and apply Theorem 3.1.1 again.) 
			
			Define
			\begin{align*}
				\ov{\phi_j}=
				\begin{cases}
					\phi_j,&\textnormal{ in }O\cap B^-_\e(\pd O)\cp,\\
					\phi_j+d\bigg(\tau_{(\frac{\e}{2},\frac{\e}{4})}(\rh)\psi_j\bigg), &\textnormal{ in }B^-_\e (\pd O),\\
					\phi , &\textnormal{ in }O\cp.
				\end{cases}
			\end{align*}
			Note that by construction, for points of distance more than than $\frac{3\e}{4}$ to  $\pd O$ inside $O,$ $\ov{\phi_j}$ equals $\phi_j.$ Similarly for points of distance less than than $\frac{\e}{4}$ to  $\pd O$ in $O,$ $\ov{\phi_j}$ equals $\phi.$
			
			Thus $\ov{\phi_j}$ is smooth. It is straightforward to verify that $d\ov{\phi_j}=0,$ $\ov{\phi_j}\to \phi$ and $\ov{\phi_j}$ has the restriction properties as claimed.
			
			Let $\mu_j$ denote maximum of the comass of $\ov{\psi_j}$ in $g_j$ and in $g$ in $B_\e^-(O).$ Note that $\mu_j\to 1$ since $\psi_j\to 0,\phi_j\to \phi,g_j\to g$ smoothly. 
			
			Let $\be_{(b,a)}$ be a smooth bump function with positive real parameters $a,b,$ so that $$0\le \be_{(b,a)}\le 1,\be_{(b,a)}|_{[-(b+a),-(b-a)]}=1,\be_{(b,a)}|_{(-\infty,-(b+\frac{4}{3}a)]}=\be_{(b,a)}|_{[-(b-\frac{3}{2}a,\infty)}=0.$$ 
			Define
			\begin{align*}
				\ov{g_j}=
				\begin{cases}
					g_j,&\textnormal{ in }O\cap B^-_\e(\pd O)\cp,\\\bigg(\beta_{\frac{\e}{2},\frac{\e}{3}}(\rh)\bigg(\mu_j^{\frac{2}{m}}-1\bigg)+1\bigg)g_j^{\tau_{(\e/2,\e/3)}(\rh)}g^	{1-\tau_{(\e/2,\e/3)}(\rh)}
					, &\textnormal{ in }B^-_\e (\pd O),\\
					g, &\textnormal{ in }O\cp.
			\end{cases}\end{align*}
			Note that $\tau$ terms serve to transit between $g$ and $g_j$ for $\rh\in (-\frac{5}{6}\e,-\frac{1}{6}\e).$ The $\be$ factor transits between $\mu_j^{\frac{2}{m}}$ and $2$ for $\rh\in (-\frac{17}{19}\e,-\frac{5}{6}\e)\cup (-\frac{1}{6}\e,-\frac{1}{18}\e)$. Thus, the three pieces of $\ov{g_j}$ comes together smoothly. Using similar interval by interval checking, it is straightforward to verify that $\ov{\phi_j}$ is a calibration in $\ov{g_j}$ using Lemma 3.1 in \cite{YZa} and Lemma \ref{prodcom}.  
		\end{proof}
	\end{lem}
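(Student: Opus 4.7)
The natural approach is to perform all gluing inside the collar $B_\e^-(\pd O),$ using a staggered construction: first transition the form from $\phi_j$ to $\phi$ in one subannulus of the collar, and then transition the metric from $g_j$ to $g$ in a disjoint (or only slightly overlapping) subannulus. The payoff of the staggering is that in each subannulus only one of the two objects is moving, so the comass of $\ov{\phi_j}$ in $\ov{g_j}$ can be estimated pointwise by holding the other one fixed and appealing to the product comass bound of Lemma \ref{prodcom}.

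For the form side, the key input is the hypothesis that $\phi_j-\phi$ is closed, smooth and exact on $O$ with $\phi_j\to\phi$ smoothly. A Hodge-theoretic antiderivative operator (for example Theorem 3.1.1 of \cite{GS}) produces a smooth $\psi_j$ on $O$ with $d\psi_j=\phi-\phi_j,$ and the quantitative estimates of that theorem give $\psi_j\to 0$ smoothly. Let $\rh$ denote the signed distance to $\pd O,$ which is smooth in the Fermi coordinates on $B_\e^-(\pd O),$ and let $\tau(\rh)$ be a smooth decreasing cutoff equal to $1$ near $\pd O$ and to $0$ well inside $O.$ Set
\begin{align*}
\ov{\phi_j}=\phi_j+d\bigl(\tau(\rh)\psi_j\bigr)
\end{align*}
in the collar. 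This is automatically closed; where $\tau\equiv 0$ it equals $\phi_j,$ matching the interior of $O,$ and where $\tau\equiv 1$ it equals $\phi_j+d\psi_j=\phi,$ matching $\phi$ on $O\cp.$ So $\ov{\phi_j}$ extends to a smooth closed form on all of $M$ with the asserted restriction properties, and $\ov{\phi_j}\to\phi$ smoothly because $\psi_j\to 0$ smoothly.

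For the metric side, the natural interpolant is the pointwise geometric mean $g_j^{\sigma(\rh)}g^{1-\sigma(\rh)}$ with $\sigma$ a second smooth cutoff, because by Lemma \ref{prodcom} it satisfies $\cms_{g_j^{\sigma}g^{1-\sigma}}\alpha\le(\cms_{g_j}\alpha)^{\sigma}(\cms_g\alpha)^{1-\sigma}$ for any form $\alpha.$ Arrange $\sigma$ so that its support of transition lies strictly inside the region where $\tau\equiv 1$ or $\tau\equiv 0$ respectively; on such subannuli $\ov{\phi_j}$ already equals $\phi$ or $\phi_j,$ and the geometric mean bound gives $\cms\le 1$ directly from $\cms_g\phi\le 1$ and $\cms_{g_j}\phi_j\le 1.$ In the thin middle subannulus where $\ov{\phi_j}$ genuinely differs from both, compensate a possible small excess in comass by an overall conformal rescaling with a factor $\mu_j^{2/m},$ where $\mu_j$ bounds the comass of $\ov{\phi_j}$ in both $g$ and $g_j$ on the collar and satisfies $\mu_j\to 1$; the factor is switched on and off by a smooth bump $\be$ supported in a slightly larger subannulus, as in the statement of Lemma \ref{prodcom} combined with the comass results of \cite{YZa}.

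The main obstacle is the simultaneous bookkeeping of the three cutoff functions $\tau,\sigma,\be$: their supports must nest so that at every point of the collar at most one among form, metric, or conformal factor is actively transitioning, so that $\cms_{\ov{g_j}}\ov{\phi_j}\le 1$ holds in every subregion by a single clean application of Lemma \ref{prodcom}. Checking the smooth matching across the boundaries of these subannuli is routine interval-by-interval verification, and once this is in place smooth convergence $\ov{g_j}\to g$ and $\ov{\phi_j}\to\phi$ follows automatically from $\psi_j\to 0,$ $\mu_j\to 1,$ $\phi_j\to\phi$ and $g_j\to g$ smoothly.
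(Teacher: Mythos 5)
Your construction follows the paper's blueprint closely: a Hodge-theoretic antiderivative from Theorem 3.1.1 of \cite{GS} producing $\psi_j$ with $d\psi_j=\phi-\phi_j$ and $\psi_j\to 0$ smoothly, the interpolated form $\ov{\phi_j}=\phi_j+d(\tau(\rh)\psi_j)$ in the collar, a pointwise geometric mean of metrics controlled by Lemma~\ref{prodcom}, and a conformal rescaling by $\mu_j^{2/m}$ switched on by a bump $\be$. That is the paper's architecture and the verification route is the same (interval-by-interval checking in the collar).

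There is, however, one small but real gap in how you argue the comass bound on the metric transition subannulus. You claim that by staggering the cutoffs so that the metric transitions only where the form is already locked at $\phi$ or $\phi_j$, the geometric mean estimate closes ``directly from $\cms_g\phi\le 1$ and $\cms_{g_j}\phi_j\le 1.$'' That is not the needed pair of inequalities. If the metric is transitioning where $\ov{\phi_j}=\phi$, Lemma~\ref{prodcom} gives $\cms_{g_j^{\sigma}g^{1-\sigma}}\phi\le(\cms_{g_j}\phi)^{\sigma}(\cms_{g}\phi)^{1-\sigma}$, and while $\cms_g\phi\le 1$, there is no reason for $\cms_{g_j}\phi\le 1$: $\phi_j$, not $\phi$, is the calibration for $g_j$. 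Symmetrically, on a subannulus where $\ov{\phi_j}=\phi_j$ one would need $\cms_g\phi_j\le 1$, which again need not hold. So purely disjoint staggering with the conformal correction confined to ``the thin middle subannulus where $\ov{\phi_j}$ genuinely differs from both'' does not suffice; the bump $\be$ supporting the $\mu_j^{2/m}$ factor must cover the metric transition zone as well, not just the form transition zone. That is exactly how the paper arranges the parameters of $\be_{(\e/2,\e/3)}$ relative to $\tau_{(\e/2,\e/3)}$ and $\tau_{(\e/2,\e/4)}$: the conformal factor equals $\mu_j^{2/m}$ throughout the band where either the form or the metric is changing, and only tails off to $1$ in sub-bands where both are already locked to their endpoint values, where no correction is needed. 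With that adjustment your $\mu_j$ (defined as the supremum over the collar of the comass of $\ov{\phi_j}$ in both $g$ and $g_j$) does the job and the rest of your argument goes through unchanged.
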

	The following lemma will be useful for determining the exactness condition above.
	\begin{lem}
		\begin{itemize}
			\item $U$ is a smooth domain in a smooth manifold $M^{m+n}.$
			\item $H_m(U,\Z)=\sum_j\Z[T_j]$ is finitely generated by integral currents $T_j.$
			\item The classes $[T_j]\s H_m(M,\Z)$ are linearly independent over $\Z.$
		\end{itemize}
		Suppose we have a closed differential form $\phi$ in $U,$ with $T_j(\phi)=\ai_j\not=0.$ Then any closed forms $\psi$ in $M$ with $T_j(\psi)=\ai_j$ lies in the same de Rham cohomology class as $\phi$ in $U.$
	\end{lem}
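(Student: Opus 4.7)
The plan is to reduce exactness of $\phi-\psi|_U$ to an application of de Rham's theorem on $U$. Note that $\phi$ and $\psi|_U$ are both closed $m$-forms on $U$, so their difference is closed; what I want to show is that this difference is exact on $U$.

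First I would invoke de Rham's theorem, which tells us that the period pairing $H^m_{dR}(U)\times H_m(U,\R)\to \R$ is nondegenerate. Consequently, a closed $m$-form on $U$ is exact if and only if its integral against every class in $H_m(U,\R)$ vanishes. Since $H_m(U,\Z)=\sum_j\Z[T_j]$ by hypothesis, and $H_m(U,\R)=H_m(U,\Z)\ts_\Z\R$ (the universal coefficient Tor term dies over $\R$), the classes $[T_j]$ already span $H_m(U,\R)$. So it suffices to check that $\phi-\psi|_U$ pairs to zero with each $T_j$, and I would be reduced to an essentially tautological verification.

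Now comes that verification. Each $T_j$ is an integral current in $U$, so $\spt T_j\s U$; hence $T_j(\psi|_U)=T_j(\psi)=\ai_j$ by the assumption on $\psi$, while $T_j(\phi)=\ai_j$ by the assumption on $\phi$. Therefore $T_j(\phi-\psi|_U)=0$ for every $j$, and the de Rham criterion above gives $[\phi-\psi|_U]=0$ in $H^m_{dR}(U)$, i.e.\ $\phi$ and $\psi|_U$ lie in the same de Rham cohomology class on $U$. There is no real analytic obstacle here. The linear independence of the $[T_j]$ in $H_m(M,\Z)$ appears unnecessary for the argument itself, but it ensures the $[T_j]$ are non-torsion (in particular free in $H_m(U,\Z)$), so that prescribing the periods $\ai_j$ independently is meaningful --- which is presumably how this lemma is invoked to verify the exactness hypothesis of Lemma \ref{glucalfor}.
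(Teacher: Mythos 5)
Your proof is correct and takes essentially the same route as the paper: reduce exactness of $\phi-\psi|_U$ to vanishing of its periods against the generators $[T_j]$ via the universal coefficient theorem / de Rham pairing. Your observation that only spanning of $H_m(U,\R)$ is needed (so the $\Z$-linear independence of $[T_j]$ in $H_m(M,\Z)$ is not strictly used in the argument) is also accurate; the paper invokes that hypothesis to assert the $[T_j]$ form a basis, which is stronger than necessary for the conclusion.
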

	\begin{proof}
		Since $H_m(U,\Z)=\sum_j\Z[T_j],$ by the universal coefficient theorem, we deduce that $H^m(U,\R)\cong\hom(H_m(U,\R),\R)\cong\hom(H_m(U,\Z)\ts\R,\R).$	By the linear independence of $T_j$ in $H_m(M,\Z)$, we know that $T$ form a basis of $H_m(U,\Z)\ts \R.$ The lemma follows directly from regarding $\psi$ and $\phi$ as linear functionals on $H_m(U,\Z)\ts \R.$
	\end{proof}
	\subsection{Definition of mod $v$ minimizing}
	The definition of mod $v$ minimizing is the same one as Definition 1.2 in \cite{DPHM}. \begin{defn}\label{modu}
		An integral current $T$ on a manifold $M$ is a cycle (or boundary) mod $v,$ if for some integral current $S,$ $T+vS$ is a cycle (or boundary, respectively) as an integral current. We say a mod $v$ cycle $T$  is area minimizing mod $v,$ if for any mod $v$ boundary $S,$ we have
		\begin{align*}
			\cm(T)\le\cm(T+S),
		\end{align*}with $\cm$ the mass of currents.
	\end{defn}
	\begin{rem}
		By Corollary 1.5 and 1.6 in \cite{RY}, the above definition is equivalent to the canonical definition of mod $v$ flat chains. 
	\end{rem}	
	\section{Proof of Theorem \ref{prevalence}, \ref{prev} and Theorem \ref{tor}}
	\subsection{Proof of Theorem \ref{prev}}
	\subsubsection{Contents of Theorem 1.2 in \cite{ZL}}\label{recap}First, let us recall the proof of Theorem 1.2 in \cite{ZL}. Take the standard torus $\TO^{2(n-j)}$ and any $j$-dimensional manifold $N_j.$ Split the torus into product $\TO^{2(n-j)}=\TO^{n-j}\times \TO^{n-j}$ and use $(x_1,\cd,x_{n-j})$ and $(y_1,\cd,y_{n-j})$ to denote the coordinates lifting the factors to $\R^{n-j}.$ Define
	\begin{align}
		\cu{x_1\cd x_{n-j}}=\TO^{n-j}\times\{0\},\cu{y_1\cd y_{n-j}}=\{0\}\times \TO^{n-j}.
	\end{align}
	Then the two currents intersect only at $\{0\}\times\{0\},$ an isolated point. Let $f_j$ be a smooth function on $N_j,$ a $j$-dimensional closed smooth manifold. We can make the $C^\infty$ norm arbitrarily small by multiplying $f_j$ with small constants. Abusing the notation a bit, we will also use $f_j$ to denote the graph of $f_j$ in $N_j\times[-\frac{1}{4},\frac{1}{4}]\s N_j\times S^1.$
	
	Then we construct in Section 3 of \cite{ZL} a smooth metric $g$ and calibration form $\phi$ on 
	\begin{align}\label{o1}
		\TO^{2(n-j)}\times O_1,
	\end{align} with $O_1$ a tubular neighborhood of $f_j$ in $N_j\times S^1$ which contains $N_j$, so that
	\begin{align}\label{tsum}
		T=\cu{x_1\cd x_{n-j}}\times N_j+\cu{y_1\cd y_{n-j}}\times f_j
	\end{align}
	is calibrated by $\phi$ in $g.$ The singular set of $T $ is precisely $f_j\cap N_j,$ which is prescribed closed subset $K_j$ of $N_j.$
	\subsubsection{Building blocks of singularities}
	Split $\R^4$ into $\R^2\oplus\R^2$. Similar to the previous subsection, denotes the first subspace, the $x_1x_2$-plane, by $X$, and the second factor, the $y_1y_2$-plane, by $Y$, . Let 
	\begin{align}
		&\si(Y)=(Y\times \R^{n-1})\cap S^{n+2},\si(X)=(X\times \R^{n-1})\cap S^{n+2}\\
		&C=X+Y,\si(C)=(C\times \R^{n-1})\cap S^{n+2},
	\end{align} 
	By Lemma 4.1 in \cite{ZL1}, there exists a tubular neighborhood $U$ around the singular set of $\si(C)$ and diffeomophism $\Gamma$ from $U$ to $B_1^4\times S^{n-2}$ so that \begin{align}\label{uu}
		\Ga(U)=B_1^4\times S^{n-2},\Ga(\si(C)\cap U)=[(X+Y)\times S^{n-2}]\cap\Ga U.
	\end{align} 
	Now in the notation of the last subsubsection, regard $[-\frac{1}{2},\frac{1}{2}]\s S^1$, set $N_{n-2}=S^{n-2}$ and let $f_{n-2}$ be the a smooth function with a prescribed zero set $K\s N_{n-2}$ and small $C^\infty$ norm. Consider $(\Ga\times\id)\m(Y\times f_{n-2})$ in $U\times [-\frac{1}{2},\frac{1}{2}].$ With $C^\infty$ norm of $f_{n-2}$ small enough, we can assume that $(\Ga\times\id)\m(Y\times f_{n-2})$ is obtained from exponentiating a section $v_U$ in its normal bundle of  $\si(Y)\cap U\times [-\frac{1}{2},\frac{1}{2}]$. Moreover the $C^\infty$ norm of $v_U$ is controlled by that of $f_{n-2}.$   
	Note that $\si(Y)$ is a totally geodesic $n$-sphere of $S^{n+2},$ thus having trivial normal bundle in the inclusion $\si(Y)\s S^{n+2}\s S^{n+2}\times S^1.$ Thus, we can easily extend $v_U$ to a section $$v_f$$ in the normal bundle of $\si(Y)$ with $C^\infty$ norm controlled by that of $f_{n-2}$ and $v_f$ equal to $0$ on $\si(Y)\cap B_{\e}(U)\cp,$ i.e., portion of $\si(Y)$ of distance $\e$ away from $U$. Define
	\begin{align*}
		Y_f=\exp^\perp_{\si(Y)\s S^{n+2}\times S^1}v_f.
	\end{align*}
	It is straightforward to verify that with $\e,\no{f_{n-2}}_{C^\infty}$ small, one can ensure that $\si(X)$ intersects $Y_f$ only inside $U,$ and the intersection is precisely the zero set $K$ of $f_{n-2}$ on the intersection $\si(X)\cap\si(Y)=S^{n-2}.$
	
	By the discussion in the last section, we know that there exists a smooth metric $g$ and calibration form $\phi$ in $$U\times [-\frac{1}{2},\frac{1}{2}],$$ so that $\si(X) +Y_f$ is calibrated by $\phi$ in $g.$ (To see this, note that by construction, $\si(X)+Y_f$ is sent to (\ref{tsum}) by $\Ga$.)
	
	Define a map $I:S^{n+2}\times [-\frac{1}{2},\frac{1}{2}]\s B_2^{m+n},$ by 
	\begin{align}
		I(p,t)\mapsto ((1+t)p,0,\cd,0),
	\end{align}
	where $p\in S^{n+2}\s \R^{n+3},$ and the last $m-3$ coordinates are zero. It is straightforward to verify that $I$ is an embedding.
	
	Now equip $\R^{m-3}$ with the standard flat metric $\de$, then it is straightforward to verify that in $I(U\times[-\frac{1}{2},\frac{1}{2}])\times \R^{m-3},$ the form $I\pf(\phi)$ extended constantly along $\R^{m-3}$ directions is a calibration form in the product metric of $I\pf g$ and $\de$. Call this extended form $\ov{f}$ and the product metric $\ov{g}.$ 
	
	To sum it up, we have proven that there exists a smooth metric $\ov{g}$ and a smooth form $\ov{\phi}$ defined in $I(U\times[-\frac{1}{2},\frac{1}{2}])\times \R^{m-3}$, that calibrates the restriction of $I\pf(\si(X)+Y_f)$ in that region. 
	\subsubsection{Finishing the proof}
	Let $\Si$ be a smoothly embedded representative of the homology class we consider. Pick a point $q\not\in \Si$ and consider the ball around $q.$ Without loss of generality, we can identify it as $B_2^{m+n}\s \R^{m+n}.$ By our previous discussion, $\si(X)+Y_f$ supported in $S^{n+2}\times S^1$ embeds into the intersection of $I(U\times [-\frac{1}{2},\frac{1}{2}])\times \R^{m-3}$ and $B_2^{m+n}.$ Pick three points $p\in \Si,a\in I\pf(\si(X)),b\in I\pf(Y_f) $, with $a,b\not\in I(U\times [-\frac{1}{2},\frac{1}{2}]).$ Then do a connected sum of $\Si,\si(X)$ and $Y_f$ using two small necks from $p$ to $a$ and $a$ to $b.$ Since the codimension $n$ is at least $3,$ by transversality, we can assume that the resulting $\Si\# \si(X)\# Y_f$ is immersion, and the self-intersection set is still $\si(X)\cap Y_f.$ By transversality and making $\e,\no{f_{n-2}}_{C^\infty}$ small, we can assume the existence of a small open set $V,$ so that $(\si(X)\cap\si(Y))\cup(\si(X)\cap Y_f)\s V,$ and in $V$, $\Si\# \si(X)\# Y_f$ equals $\si(X)+Y_f.$ Now apply Lemma 2.7 in \cite{ZL1}. We are done.\subsection{Proof of Theorem \ref{prevalence}}
	By \cite{RT}, for any integral homology class, some multiple of it can be represented by embedded smooth orientable submanifolds. Say the $d$-dimensional rational homology of $M^{d+c}$ is nontrivial. Then we know that some $d$-dimensional integral homology class of $M$ can be represented by a smoothly embedded orientable submanifold. Just apply Theorem \ref{prev}.
	\subsection{Proof of Theorem \ref{tor}}
	By the assumptions, we can decompose $\TO^{m+n}$ as a product
	\begin{align*}
		\TO^{m+n}=\TO^{2(m-l)}\times \TO^{l}\times \TO^1\times\TO^{n-m+l-1}.
	\end{align*}
	To see this, consider the span of $\TO_1$ and $\TO_2,$ which is a $\TO^{2m-l}.$ Since the intersection is not transverse, we know that $m+m-l< m+n,$ which gives $1\le n-m+l.$ Thus, we can always split as product $\TO^{m+n}=\TO^{2m-l}\times \TO^{n-m+l}$ and consider $\TO_1,\TO_2$ sitting inside $\TO^{2m-l}\times\{0\}.$ We do a further splitting off the intersection $\TO_3$ to get $\TO^{2m-l}=\TO^{2(m-l)}\times\TO_3=\TO^{2(m-l)}\times \TO^l.$ We also split a further $\TO^1$ out of $\TO^{n-m+l}.$
	
	For the following, we always work in the first three factors $\TO^{2(m-l)}\times \TO^{l}\times \TO^1$ until the last step. By the recap in Section \ref{recap}, for any closed set of $K,$ we can find a smooth perturbation of $\TO_2,$ $\TO_f=\cu{y_1\cd y_{m-l}}\times f_l,$ where $f_l$ has $K$ as zero set, and a smooth metric and a calibration that calibrates the sum of $\TO_1$ If we set $f^j_l=j\m f_l,$ and consider the construction obtained by substituting $f_l$ with $f^j_l,$ then we get a sequence of perturbations of $\TO_1+\TO_2,$ calibrated in $\TO^{2(m-l)}\times T^l\times [-\frac{1}{3},\frac{1}{3}]$ by a sequence of metrics $g_j$ and forms $\phi_j$ that converges to the standard flat metric $\de$ and the form $\TO_1\du+\TO_2\du$.  Now just apply Lemma \ref{glucalfor} with $g=\de,\phi=\TO_1\du+\TO_2\du$ and $g_j,\phi_j$ the sequence we constructed. If $n-m+l-1=0,$ then we are done. If $n-m+l-1>0,$ then we just use the product metric of the metric we obtained on $\TO^{2(m-l)}\times \TO^l\times \TO^1$ with the flat metric on $\TO^{n-m+l-1}$ and extend the forms constantly along the $\TO^{n-m+l-1}$ directions. The conclusion still holds.
	\section{Proof of Theorem \ref{mod}, Theorem \ref{modl}, Corollary \ref{cte}}
	\subsection{Proof of Theorem \ref{modl}}
	The proof is a direct extension of the proof of Theorem 1.2 in \cite{ZL1}. 
	\subsubsection{Recap of Section 3 in \cite{ZL1}} Let us recall the proof of Theorem 1.2 in \cite{ZL1}, in which case all self-intersections are transverse. 
	
	First, by Allard type regularity, we know that any nearby area-minimizing current $T'$ in nearby metrics is smooth and graphical over $T$ outside of a neighborhood $U$ of the transverse intersection set of $T.$ Moreover, $U$ shrinks to the self-intersection set as the current $T'$ converges to $T.$
	
	Without loss of generality, assume $T'$ and $T$ has smooth boundary in each connected component of $U.$ By our assumptions, in each connected component of $U,$ $\pd(T'\res U)$ has two connected components $V_1,V_2$ corresponding to two sheets of $T'$ in each component of $U.$ Similarly for $T.$ Denote the two sheets of $T$ in each component by $X,Y.$
	
	If we solve for the area-minimizing $X',Y'$ current with boundary $V_1,$ (and $V_2$) to get $\pd X'=V_1,\pd Y'=V_2.$ Then by Allard type regularity, we know that $X',Y'$ are smooth and graphical over $X,Y$, respectively. Thus, $X'\cap Y'=L'$ is a submanifold coming from the transverse intersection.
	
	The key result is Lemma 3.5 in \cite{ZL1}. We construct a vanishing calibration $\Psi_X'$  that depends geometrically on $X',L',$ so that $\Psi_Y'$ vanishes outside of $W_{(r,\ta)}(X',L')$. Similarly for $Y'.$ Here $\ta<2\arctan \frac{2}{\sqrt{a^2-4}},$ with $a$ the codimension of $L'$ in $X'$, and $r$ can be estimated to have uniform lower bound depending on $\ta,X,Y.$ With $r$ small, we can ensure that $W_{(r,\ta)}(X',L')\cap W_{(r,\ta)}(Y',L')=L',$ and thus $\Psi_X'+\Psi_Y'$ can serve as a calibration form that calibrates $X'+Y'$ by Lemma 2.2 in \cite{ZL1}. 
	
	However, the interior of homological area-minimizing currents with multiplicity $1$ has a unique continuation (Lemma 2.11 in \cite{ZL1}). Thus $X'+Y'$ is precisely $T\res U$ and we are done.
	\subsubsection{Modifications for our proof}
	The biggest difference here is that the two components $X,Y\s U$ no longer intersect transversely. The intersection is merely clean and thus we have no way to determine the intersection of nearby $X',Y'.$ Fortunately, Lemma \ref{ang} provides $L_X',L_Y'$, so that $W_{(r,\ta)}(X',L'_X)\cap W_{(r,\ta)}(Y',L'_Y)=L'_X\cap L_Y'.$ Thus if we use the vanishing calibration form above with $L_X'$ replacing $L'$ in $X'$ and similarly for $Y'$, we can still get a vanishing calibration $\Psi_X'+\Psi_Y'$ that calibrates $X'+Y'$. The argument goes through and we are done. In the mod $v$ case, just replace the calibration form with retractions as in Section 5 of \cite{ZL}. The result about smooth convergence follows from \cite{WA} and \cite{WA2}.
	\subsection{Proof of Theorem \ref{mod}}
	This is a direct corollary of Theorem \ref{modl}, provided the current we construct can be made arbitrarily close to an area-minimizing current with a clean intersection. Notice that we always have an auxiliary function determining the intersection set. We can just choose a sequence of numbers $\e_j\to 0,$ and multiply the auxiliary functions. Then use Lemma 2.7 in \cite{ZL1}. It is straightforward to verify that when the auxiliary function is precisely zero we have clean intersections only. When $\e_j$ is small, then Theorem \ref{modl} applies and we are done. In the mod $2$ case, just replace the calibration form with retractions as in Section 5 of \cite{ZL}.
	\subsection{Proof of Corollary \ref{cte}}
	By Theorem \ref{mod}, we know that moduli space of area-minimizing currents in $U$ in for metrics in an open set $\Om_0$ consists of immersions of the diffeomorphic manifolds.  
	
	To prove the corollary, the goal is to find a sequence of metric $g_j\to g$ and $g_j\in \Om_0,$a sequence of area-minimizing currents $T_j\to T,$ so that each $T_j$ is the unique minimizer and each $T_j$ has nontrivial transverse intersections. (We do not assume this is the only kind of singularity of $T_j$) By uniquely minimizing and Theorem \ref{mod}, for each $j$ there exists an open set $\Om_j\s \Om_0$ in the space of metrics, so that all area-minimizers $T_j'$ will be immersions that are close to $T_j.$ By transversality and making $\Om_j$ small, we can ensure that $T_j'$ also have non-empty transverse intersections. Then $\Om_T=\cup_{j\ge 1}\Om_j$ does the job for us. The generic case follows directly from Theorem 1 in \cite{BW2}.
	
	To find such a sequence, note that we always have an auxiliary function $f$ determining the intersection set, and our calibrations and metrics around the singular set depend smoothly on $f.$ Let $p$ be a point with $f(p)=0.$ Let $\be$ be a smooth bump supported near $p$ in a normal coordinate system $(x_1,\cd,x_n)$. Then $f_j=f+j\m\be x_1$ converges smoothly to $f$ as $j\to\infty,$ and the graph of $f_j$ intersects the domain transversely near $p.$ It is straightforward to verify that the currents constructed with $f_j$ as auxiliary functions have at least one transverse intersection point. Now we just apply Lemma 2.7 in \cite{ZL1} and we are done.
	\section{Proof of Theorem \ref{torus}}
	\subsection{Setting up the coordinate}
	Let $\R^{d+c}$ be the standard Euclidean space that is the universal cover of $\TO^{m+n}.$  By our assumptions, there exist subspaces $X_1,\cd,X_k$ in general position that projects down to $\TO_1,\cd,\TO_n.$ 
	
	Note that there exist a standard coordinate system $(x_1,\cd,x_{d+c})$ on $\R^{d+c}$, os that each $X_j$ is spanned by coordinate vector fields $x_{j_1},\cd,x_{j_d}$. To see this, set up an arbitrary orthonormal basis of $\R^{m+n}.$ Consider the projection matrices $P_j$ defined by orthogonal projection into $X_j.$ It is straightforward to verify that $P_jP_i=P_iP_j$ (due to orthogonality of the planes) and $P_j^\dagger=P_j$ (direct consequence of orthogonal projection). Thus, we have a finite set of commuting Hermitian matrices with real entries. Then spectral theorem and simultaneous diagonlizability results give the orthonormal basis $\{x_l\}$. 
	\subsection{The calibration form}
	Since constant forms projects down to well-defined smooth forms on $\TO^{m+n},$ we deduce that $X_j\du=dx_{j_1}\w\cd\w dx_{j_d}$ calibrates $\TO_j.$ 
	
	Equip $N$ with an arbitrary smooth metric $$h_N$$. Without loss of generality, we can assume that there exists a smooth domain $U$ containing $N_1,\cd,N_k,$ so that the normal bundle exponential map of each $N_j$ maps injectively onto $U.$ Let $\om_j$ be the volume form of $N_j$ and $\pi_j$ the nearest distance projection to $N_j.$ Define
	\begin{align*}
		\phi=\sum_jX_j\du\w\pi_j\du\om_j.
	\end{align*}
	By construction, $d\phi=0$ (since each $X_j\du$ is constant and $d\pi_j\du\om_j=\pi_j\du(d\om_j)=0.$)
	\subsection{Prototype metric} 
	We can write the metric on $\TO^{d+c}$ as
	\begin{align*}
		\de=\sum_j dx_j^2.
	\end{align*}
	Our aim is to find smooth functions $\la_1,\cd,\la_{d+c},$ so that in the metric
	\begin{align}\label{metric}
		g=\sum_j \lam_j dx_j^2+h_N.
	\end{align}
	At any point $p$
	\begin{align*}
		e_1=	\lam\m_1\pd_1,\cd,e_{d+c}=\lam\m_{d+c}\pd_{d+c},\nu_1,\cd,\nu_{m+n}
	\end{align*}
	forms an orthonormal basis in $g,$ where $\pd_j$ is the coordinate vector associated with $x_j,$ $\nu_1,\cd,\nu_{m+n}$ is an arbitrary orthonormal basis of $\Pi_N(p),$ with $\Pi_N$ the projection on to the $N$ factor. The dual basis is
	\begin{align*}
		e_1\du=	\lam_1dx_1,\cd,e_{d+c}\du=\lam_{d+c}dx_{d+c},\nu_1',\cd,\nu_{m+n}',
	\end{align*}
	where $\nu_1',\cd,\nu_{m+n}'$ is the dual basis to $\nu_1,\cd,\nu_{m+n}$ in $N.$
	Thus we can write 
	\begin{align}\label{cali}
		\phi=\sum_{j=1}^k (\cms_{N}\pi\du_j\om_j\lam\m_{j_1}\cd\lam\m_{j_d})e_{j_1}\du\w\cd\w e_{j_d}\du\w\frac{\pi\du_j\om_j}{\cms_{N}\pi\du_j\om_j}.
	\end{align}
	Since $\pi_j\du\om_j$ is a simple form, its comass is just its norm, i.e.,
	\begin{align*}
		\cms_N\pi_j\du\om_j=\sqrt{h_N(\pi_j\du\om_j,\pi_j\du\om_j)}.
	\end{align*}
	Thus, we can write
	\begin{align*}
		\frac{\pi\du_j\om_j}{\cms_{N}\pi\du_j\om_j}=B_j\du,
	\end{align*}where  $B_j$ is a smooth section of $n$-dimensional planes in $U\s N.$ Thus, if the coefficient for every term in (\ref{cali}) is precisely $1,$ then Lemma \ref{prodcal} would imply that $\cms_g\phi=1.$ Thus, we need to solve the system formed by simultaneously for all $j,$ \begin{equation*}
		\sum_{l=1}^d\log\lam_{j_l}=\log\cms_N\pi_j\du\om_j.
	\end{equation*}
	Note that if we regard $\log\lam_{j_l}$ as unknowns, then this is a linear system. The $j$-th row of the corresponding matrix has $0$ in the coordinate direction not in $X_j$ and $1$ in the coordinate directions contained in $X_j.$ We can explicitly reduce the system to a determined system as follows. First, one can take out all rows with $1$ only, then take out all rows with $0$ only. (This corresponds to setting $\lam_l=1$ for $\pd_l$ either contained in all $X_j$ or none of $X_j.$ Note that for $k\ge 2,$ all $\pd_l$ is contained in at least one $X_j.$) Then every column left has one and only one $0$, with $1$ in the rest entries, since the orthogonal complements of $X_j$ are linearly independent. Note that for each column, there exist precisely $c$ columns equal to it, again by linear independence of orthogonal complements to $X_j.$ Then one can replace all equal columns with a single column. (This means that if $x_{i_1},\cd,x_{i_c}$ spans the orthogonal complement to $X_j$, then we set $\lam_{i_1}=\cd=\lam_{i_c}.$) After all the operations, we are left with a matrix consisting of nonequal columns that have only one $0$ entry and all the rest entries are $1.$ Moreover, there are no rows with only $1$. Otherwise, revert the operations, we would get an $X_j,$ whose orthogonal complement intersects the orthogonal complement of other $X_i,$ a contradiction. This implies that each row consists of only one entry of $0$ and all other entries are one. Thus, up to rearranging columns and rows, the matrix can be written as $vv^T-\id,$ with $v=(1,1,\cd,1).$ By direct calculation or the matrix determinant lemma, we deduce that $\det(vv^T-\id)=k-1.$ Thus, the reduced system is a determined linear system and has a unique solution. Then we just revert back the operations to get our $\{\lam_l\}.$ The process gives $c\log\lam_l$ either $0$ or depending linearly on $\cms_N\pi_j\du\om_j.$ Thus, we can ensure that $\lam_j$ is smoothly defined on $\TO^{d+c}\times U\s \TO^{d+c}\times N.$
	
	To sum it, we have constructed a smooth metric $g$ and a closed form $\phi,$ both defined on $\TO^{d+c}\times U,$ that calibrate $T=\sum_j \TO_j\times N_j.$ 
	\subsection{Gluing constructions to finish the proof}
	Just apply Lemma 2.7 in \cite{ZL1} and we are done.
	\section{Proof of Theorem \ref{slag}}
	In this section, we will use the setup of $\R^6:(x_1,y_1,\cd,x_3,y_3)$ equipped with the standard metric and standard complex structure of $J\pd_{x_j}=\pd_{y_j},J\pd_{y_j}=-\pd_{x_j}.$
	\subsection{Transition of special Lagrangian form}
	First, let us do a transition construction between the special Lagrangian form and the form we use.
	\begin{lem}\label{ins}
		There exists a smooth metric $\ov{g}$ and a smooth calibration form $\ov{\psi}$ near the $x_3$-axis, so that up to changing the $y_3$ coordinate by constant factors in each interval, $\ov{g}$ coincide with the standard metric on $x_3\in(-\infty,t_0]\cup[t_1,t_2]\cup [t_3,\infty]$ for $t_1<t_2<t_3<t_4$, and the form $\ov{\psi}$ coincides with the standard special Lagrangian form for $x_3\in(-\infty,t_1]\cup (t_4,\infty]$, and equals $dx_1\w dx_2\w dx_3-dy_1\w dy_2\w dx_3$ on $[t_1,t_2].$
	\end{lem}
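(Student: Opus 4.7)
The plan is to construct $\ov\psi$ via a Poincar\'e primitive trick so that closedness is automatic, and then to absorb the residual comass excess by a conformal rescaling of the metric restricted to the transition windows $[t_0,t_1]$ and $[t_2,t_3]$, handling the outer interval $[t_3,t_4]\cup(t_4,\infty)$ symmetrically.

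The key algebraic fact is that $\psi_0-\psi_1=\Om:=dx_1\w dy_2\w dy_3+dy_1\w dx_2\w dy_3$ has the explicit Poincar\'e primitive $\be:=x_1\,dy_2\w dy_3+y_1\,dx_2\w dy_3$. Fixing a smooth switch $\rh(x_3)$ with $\rh\equiv 0$ on $(-\infty,t_0]\cup[t_3,\infty)$ and $\rh\equiv 1$ on $[t_1,t_2]$, monotone in between, I would set
\[
\ov\psi:=\psi_1+d(\rh(x_3)\,\be)=\psi_1+\rh(x_3)\,\Om+\rh'(x_3)\,dx_3\w\be.
\]
This form is manifestly closed, equals $\psi_1$ on $(-\infty,t_0]\cup[t_3,\infty)$, and equals $\psi_1+\Om=\psi_0$ on $[t_1,t_2]$, matching the statement (with the required transitions concentrated in $[t_0,t_1]$ and $[t_2,t_3]$, where the metric is also allowed to deviate from standard).

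For the comass analysis, the two target $3$-planes $X=\pd_{x_1}\w\pd_{x_2}\w\pd_{x_3}$ and $Y=\pd_{y_1}\w\pd_{y_2}\w\pd_{x_3}$ annihilate both $\Om$ and $dx_3\w\be$ (each either has a $dy_3$-factor absent, or the coefficients $x_1,y_1$ of $\be$ give zero on these planes), so $\ov\psi(X)=1$ and $\ov\psi(Y)=-1$ throughout. Along the $x_3$-axis $\be$ vanishes identically, so $\ov\psi$ reduces to the convex combination $(1-\rh)\psi_1+\rh\psi_0$, whose flat comass is at most $1$ by subadditivity of the comass norm. In a tubular neighborhood of the $x_3$-axis of radius $\e$, the correction term satisfies $\cms_{g_{\rm std}}(\rh'\,dx_3\w\be)\le C\e|\rh'(x_3)|$, yielding the crucial bound $\cms_{g_{\rm std}}\ov\psi\le 1+C\e|\rh'|$. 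I would then take $\ov g:=\lam(x_3)^2\, g_{\rm std}$ with $\lam$ smooth, $\lam\equiv 1$ wherever $\rh'\equiv 0$ (in particular on each of $(-\infty,t_0]$, $[t_1,t_2]$, and $[t_3,\infty)$), and $\lam^3\ge 1+C\e|\rh'|$ in the two transition windows. Since comass of a $3$-form scales as $\lam^{-3}$ under $g\mapsto\lam^2 g$, this forces $\cms_{\ov g}\ov\psi\le 1$ globally, and $\ov\psi$ becomes a calibration in $\ov g$. On the three intervals specified in the statement one recovers $\ov g=g_{\rm std}$, and one may post-compose independently with any constant linear rescaling of $y_3$ on each to match whatever normalization is needed downstream.

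The main obstacle is proving the uniform pointwise bound $\cms_{g_{\rm std}}\ov\psi\le 1+C\e|\rh'|$ on \emph{all} unit simple $3$-planes, not merely the Lagrangian family or the two distinguished planes $X,Y$. This boils down to showing that on unit planes obtained as small perturbations of $X$ or $Y$ by rotation into the $dy_3$-direction, the linear correction $\rh'\cdot O(|x_1|+|y_1|)$ is dominated, after the conformal rescaling, by the quadratic decay of $(1-\rh)\psi_1+\rh\psi_0$ away from $X$ and $Y$. The computation is directly analogous to the standard SL-plane parametrization by oriented angles $(\ta_1,\ta_2,\ta_3)$: on such a rotated plane $\psi_0-\rh\Om$ computes to $\cos(\ta_1+\ta_2)\cos\ta_3-\rh\sin(\ta_1+\ta_2)\sin\ta_3$, and the correction $\rh'\,dx_3\w\be$ adds $-\rh'(x_1\sin\ta_2+y_1\cos\ta_2)\cos\ta_3\sin\ta_3$ at leading order, which after maximization over angles is bounded by the claimed $C\e|\rh'|$.
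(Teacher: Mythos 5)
Your construction starts from the same basic idea as the paper---write $\ov\psi$ as $\psi_1+d(\text{cutoff}\cdot\text{primitive})$ so that closedness is automatic, then conformally inflate the metric to absorb the comass excess produced by the derivative of the cutoff---but the choice of primitive makes the argument fail at the step where you actually need $\ov\psi$ to \emph{calibrate} the two planes $X=\{y_1=y_2=y_3=0\}$ and $Y=\{x_1=x_2=y_3=0\}$, not merely be a closed form of comass $\le 1$.

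The correction term in your construction is $\rh'(x_3)\,dx_3\w\be$ with $\be=x_1\,dy_2\w dy_3+y_1\,dx_2\w dy_3$, whose coefficients involve the coordinates $x_1$ and $y_1$. Neither of these vanishes on $X\cup Y$: on $X$ the coordinate $x_1$ runs over the whole interval $[-\e,\e]$ of the tube, and on $Y$ the coordinate $y_1$ does. Hence at a point $p\in X$ with $x_1\ne 0$ the perturbation $\rh' x_1\,dx_3\w dy_2\w dy_3$ is a genuine nonzero $3$-form, and one can check by tilting $X$ simultaneously into $\pd_{y_2}$ and $\pd_{y_3}$ (say $\pd_{x_1}\mapsto\pd_{x_1}-\e\pd_{y_3}$, $\pd_{x_2}\mapsto\pd_{x_2}+\e\pd_{y_2}$) that $\ov\psi$ evaluates to $(1+\rh' x_1\e^2)/(1+\e^2)$ on the resulting unit plane; this exceeds $1$ as soon as $|\rh' x_1|>1$, and at best is a delicate second-order competition even when small. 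Your proposed fix $\ov g=\lam(x_3)^2 g_{\rm std}$ cannot resolve this: the calibration condition forces $\ov\psi(\T^{\ov g}_p X)=\lam^{-3}=1$, i.e.\ $\lam\equiv 1$ on $X\cup Y$, while the comass bound forces $\lam^3\ge\cms_{g_{\rm std}}\ov\psi$ everywhere, and $X\cup Y$ passes through all $x_3$ in the transition windows where you want $\lam>1$. You would need $\cms_{g_{\rm std}}\ov\psi\equiv 1$ on all of $X\cup Y$, which your primitive does not deliver. A second, related problem is that $\rh'\,dx_3\w\be$ has terms $dx_3\w dy_2\w dy_3$ and $dx_3\w dx_2\w dy_3$ in which both $dx_3$ and $dy_3$ appear, so $\ov\psi$ is \emph{not} of torus type and the Dadok--Harvey--Morgan torus lemma that you implicitly rely on for the angle parametrization of the comass does not apply.

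The paper's proof sidesteps both issues by choosing a primitive proportional to $y_3$, namely $-y_3\,\si(x_3)\,(dx_1\w dy_2+dy_1\w dx_2)$. Since $y_3\equiv 0$ on both $X$ and $Y$, the correction $-y_3\si'\,dx_3\w(dx_1\w dy_2+dy_1\w dx_2)$ vanishes identically on the calibrated planes, so the conformal factor can be taken $\equiv 1$ there and strictly larger than $1$ only where $y_3\ne 0$---no conflict with the calibration condition. Moreover every term in the resulting $\ov\psi$ picks exactly one of $dx_j,dy_j$ for each $j\in\{1,2,3\}$, so the torus lemma of \cite{DHM} applies directly and the comass reduces to an explicit three-angle maximization. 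Finally, the paper does not transition directly to the full $\Om$-term: it first transits from $\psi_0$ to $\psi_0-s_0\Om$ for a \emph{small} $s_0$ (Step~1, where the conformal inflation is controlled by $\|y_3\si'\|$), and then in Step~2 rescales the $y_3$-direction of the metric by a function interpolating from $1$ to $s_0$, so that in the new orthonormal coframe $s_0\Om$ becomes the full $\Om$ and the form agrees with the special Lagrangian form up to a constant $y_3$-rescaling. You should rebuild your argument around a $y_3$-proportional primitive and the torus lemma; the Poincar\'e-primitive idea and the conformal-rescaling idea are both right, but the primitive must vanish on $X\cup Y$.
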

	We will give the proof in the following two subsubsections. One involves modifying the form to delete or create small $dy_3$ terms.  Another involves rescaling the metric in the $y_3$ direction to make $dy_3$ terms small. 
	\subsubsection{Step 1:creating and deleting $dy_3$ terms}
	We will show how to do the construction on $x_3\in(t_2,t_3)$ and the case for $x_3\in (t_0,t_1)$ is similar. Let $\si$ be a smooth function that is zero on $(\infty,t_2],$ and smoothly monotonically transits to a positive constant $s_0>0$ on $(t_2,t_2+\frac{1}{2}(t_3-t_2)),$ i.e., $\si(x_3)\equiv\ai$ for $x_3>t_2+\frac{1}{2}(t_3-t_2).$ Define 
	\begin{align*}
		\ov{\psi}=&d\bigg(x_3(dx_1\w dx_2-dy_1\w dy_2)-y_3\si(dx_1\w dy_2+dy_1\w dx_2)\bigg)\\
		=&(dx_1\w dx_2-dy_1\w dy_2)\w dx_3-\si(dx_1\w dy_2+dy_1\w dx_2)\w dy_3\\
		&-y_3\si'(dx_1\w dy_2+dy_1\w dx_2)\w dx_3.
	\end{align*}
	By construction $\ov{\psi}$ is closed. Again, $\ov{\psi}$ is of a very special form, i.e., each term only contains either $dx_j$ or $dy_j.$ By 4.2 The torus Lemma in \cite{DHM}, the comass of $\ov{\psi}$ is equal to 
	\begin{align*}
		\max_{\tau_1,\tau_2,\tau_3\in[-\pi,\pi)}\ov{\psi}\bigg((\cos\tau_1\pd_{x_1}+\sin\tau_1\pd_{y_1})\w\cd\w(\cos\tau_3\pd_{x_3}+\sin\tau_3\pd_{y_3})\bigg).
	\end{align*}
	We have
	\begin{align*}
		&\ov{\psi}\bigg((\cos\tau_1\pd_{x_1}+\sin\tau_1\pd_{y_1})\w\cd\w(\cos\tau_3\pd_{x_3}+\sin\tau_3\pd_{y_3})\bigg)\\
		=&\cos(\tau_1+\tau_2)\cos\tau_3-\si\sin(\tau_1+\tau_2)\sin\tau_3-y_3\si'\sin(\tau_1+\tau_2)\cos\tau_3\\
		=&\cos(\tau_1+\tau_2)\cos\tau_3-\sin(\tau_1+\tau_2)(\si\sin\tau_3+y_3\si'\cos\tau_3)\\
		\le&\sqrt{(\cos\tau_3)^2+(\si\sin\tau_3+y_3\si'\cos\tau_3)^2}\\
		=&\sqrt{(\cos\tau_3)^2+\si^2(\sin\tau_3)^2+2y_3\si\si'\sin\tau_3\cos\tau_3+(y_3\si')^2(\cos\tau_3)^2}\\
		=&\sqrt{(1+(y_3\si')^2-\si^2)(\cos\tau_3)^2+y_3\si\si'(2\sin\tau_3\cos\tau_3)+\si^2}\\
		=&\sqrt{(1+(y_3\si')^2-\si^2)\frac{\cos (2\tau_3)+1}{2}+y_3\si\si'\sin(2\tau_3)+\si^2}\\
		=&\sqrt{\frac{1+(y_3\si')^2-\si^2}{2}\cos(2\tau_3)+y_3\si\si'\sin(2\tau_3)+\frac{1+(y_3\si')^2+\si^2}{2}}\\
		\le&\sqrt{\sqrt{(\frac{1+(y_3\si')^2-\si^2}{2})^2+(y_3\si\si')^2}+\frac{1+(y_3\si')^2+\si^2}{2}},
	\end{align*}
	where we have repetitively used the simple fact that $a\cos\ta+b\sin\ta\le \sqrt{a^2+b^2}.$ Now, by multiplying $\si$ with small constants, we can assume without loss of generality that $\no{\si}_{C^1},\no{\si}_{C^0}\le \frac{1}{2}.$ Then for $|y_3|\le 1,$ we have $\frac{3}{4}\le1+(y_3\si')^2-\si^2\le \frac{5}{4},$ and $|y_3\si\si'|\le \frac{1}{4},$ which gives $-\frac{2}{3}\le \frac{y_3\si\si'}{(1+(y_3\si')^2-\si^2)/2}\le \frac{2}{3}.$ Note that $\sqrt{1+h}\le 1+\frac{1}{2}h$ for $0<h<1$, since its Taylor series at $0$ is an alternating series. Thus, we have,
	\begin{align}\label{com}
		\textnormal{comass }\ov{\psi}\le& \sqrt{\frac{1+(y_3\si')^2-\si^2}{2}\bigg(1+\frac{1}{2}\frac{(y_3\si\si')^2}{((1+(y_3\si')^2-\si^2)/2)^2}\bigg)+\frac{1+(y_3\si')^2+\si^2}{2}}\\
		=&\sqrt{1+(y_3\si')^2\bigg(1+\frac{\si^2}{1+(y_3\si')^2-\si^2}\bigg)}.
	\end{align} Let
	\begin{align*}
		\lam(x_3,y_3)=\sqrt{1+(y_3\si')^2\bigg(1+\frac{\si^2}{1+(y_3\si')^2-\si^2}\bigg)}.
	\end{align*} 
	Now, we conformally change the standard metric $\de$ by a conformal factor $\lam^{\frac{1}{3}},$ i.e., defining $$\ov{\de}=\lam^{\frac{2}{3}}\de.$$
	Then $\ov{\psi}$ has comass less than or equal to $1$ in $\ov{\de}.$ 
	Note that when $y_3=0,$ and $\ov{\psi}$ has comass precisely $1,$ calibrating both $\cu{x_1x_2x_3}$ and $\cu{y_1y_2x_3}.$ Also for $x_3>t_2+\frac{1}{2}(t_3-t_2)$, we have $\lam\equiv 1$ and $\ov{\psi}=(dx_1\w dx_2-dy_1\w dy_2)\w dx_3-s_0(dx_1\w dy_2+dy_1\w dx_2)\w dy_3,$ with some $0<s_0<\frac{1}{2}$ by construction of $\si.$ 
	\subsubsection{Step two:transition to $dy_3$ terms small}
	Now let $\ov{\si}$ be a smooth function that is $1$ on $(-\infty,t_2+\frac{1}{2}(t_3-t_2)),$ and smoothly monotonically transits to $s_0$ on $(t_2+\frac{1}{2}(t_3-t_2),t_3),$ i.e., $\ov{\si}\equiv s_0$ for $x_3\ge t_3.$ Rescale the metric $\ov{\de}$ to a metric $\ov{g}$ by multiplying the $dy_3^2$ factor with $\ov{\si}.$ Then in the orthonormal basis in the new metric for $x_3\ge t_2+\frac{1}{2}(t_3-t_2),$, $\ov{\psi}$ becomes $\ov{\psi}=(dx_1\w dx_2-dy_1\w dy_2)\w dx_3-\frac{s_0}{\ov{\si}}(dx_1\w dy_2+dy_1\w dx_2)\w E_3\du,$ where $dx_1,dy_1,\cd,dy_2,dx_3,E_3\du=\ov{\si}dy_3$ is the dual orthonormal basis $1$-forms. Direct calculations using 4.2 The torus lemma in \cite{DHM} shows that for $x_3\ge t_2+\frac{1}{2}(t_3-t_2),$ we have \begin{align*}
		&\textnormal{comass}_{\ov{g}}\ov{\psi}\\\le& \max_{\tau_1,\tau_2,\tau_3\in[-\pi,\pi)}\cos(\tau_1+\tau_2)\cos\tau_3-\frac{s_0}{\ov{\si}}\sin(\tau_1+\tau_2)\sin\tau_3\\\le&|\cos(\tau_1+\tau_2)\cos\tau_3|+|\sin(\tau_1+\tau_2)\sin\tau_3|\\\le &1.
	\end{align*}
	This shows that $\ov{\psi}$ is a calibration in $\ov{g}$. 
	\subsection{Finishing the proof}
	Now we claim that Lemma \ref{ins} combining with \cite{ZL0} and \cite{ZL} gives what we want. First, in $x_3\in(t_1,t_2),$ we can use the similar constructions as in Section 3 of \cite{ZL} to perturb the $x_1x_2x_3$-plane and $y_1y_2x_3$-plane reversely oriented, to get arbitrary singular sets that becomes the $x_3$-axis when getting out of the interval.  Then we can use Lemma \ref{ins} to glue this into the edges on $x_3\in(\infty,t_0)\cup(t_3,\infty)$ in Section 3 of \cite{ZL0}, since here the calibration forms become the special Lagrangian up to constant rescalings of $y_3$ direction. Here we invoke Proposition 6.1 of \cite{ZL0}. In other words, we can glue the conical surface and $y_1y_2x_3$-plane to the union of the tangent plane along the ray and $\cu{y_1y_2x_3}.$ Then we glue the unions of planes to $\cu{x_1x_2x_3}-\cu{y_1y_2x_3}$.
	
	Then if we go through Sections 4 and 5 of \cite{ZL0}, the reasoning carries through and we are done. 
	
\end{document}